\documentclass{icmart}
\contact[vostrik@uoregon.edu]{Department of Mathematics,
University of Oregon, Eugene, OR 97403, USA}
\usepackage{amssymb}
\usepackage{amscd}
\usepackage{amsfonts}
\usepackage{latexsym}
\usepackage[all]{xy}
\usepackage{verbatim}

\newtheorem{theorem}{Theorem}[section]
\newtheorem{corollary}[theorem]{Corollary}
\newtheorem{lemma}[theorem]{Lemma}

\newtheorem{conjecture}[theorem]{Conjecture}

 %%%% for unnumbered statements

\theoremstyle{definition}
\newtheorem{definition}[theorem]{Definition}
\newtheorem{remark}[theorem]{Remark}
\newtheorem{example}[theorem]{Example}
%%%%%%%%%%%%%%% symbols shorthand %%%%%%%%%%%  

\newcommand{\id}{\text{id}} 
 
\newcommand{\Ker}{\text{Ker\,}}

\newcommand{\Fun}{\text{Fun}}

\newcommand{\Coh}{\text{Coh}}
\newcommand{\Sk}{\text{Sk}}
\newcommand{\Wh}{\text{Wh}}
\newcommand{\Spr}{\text{Spr}}

\renewcommand{\Vec}{\text{Vec}}
\newcommand{\Ann}{\text{Ann}}
\newcommand{\Pic}{\text{Pic}}

\newcommand{\Hom}{\text{Hom}}

\newcommand{\Out}{\text{Out}}

\newcommand{\Rep}{\text{Rep}}

\newcommand{\ev}{\text{ev}}
\newcommand{\coev}{\text{coev}}

\newcommand{\cC}{\mathcal{C}}
\newcommand{\cD}{\mathcal{D}}

\newcommand{\cZ}{\mathcal{Z}}
\newcommand{\cH}{\mathcal{H}}

\newcommand{\cA}{\mathcal{A}}

\newcommand{\cN}{\mathcal{N}}

\newcommand{\cO}{\mathcal{O}}

\newcommand{\be}{\mathbf{1}}

\newcommand{\fg}{\mathfrak{g}}
\newcommand{\fk}{\mathfrak{k}}
\newcommand{\fp}{\mathfrak{p}}
\newcommand{\fq}{\mathfrak{q}}

\newcommand{\m}{\mathfrak{m}}

\renewcommand{\be}{\mathbf{1}}

\newcommand{\BZ}{{\mathbb Z}}
\newcommand{\BC}{{\mathbb C}}
\newcommand{\BO}{{\mathbb O}}
\newcommand{\BQ}{{\mathbb Q}}
\newcommand{\BR}{{\mathbb R}}

\newcommand{\Fl}{{\mathcal B}}

\newcommand{\cM}{{\mathcal M}}
\newcommand{\cJ}{{\mathcal J}}
\newcommand{\cP}{{\mathcal P}}
\newcommand{\cU}{{\mathcal U}}

\newcommand{\ot}{\otimes}

\hyphenation{theo-re-ti-cal group-theo-re-ti-cal
semi-sim-ple al-geb-ras di-men-sions sim-ple ob-jects
equi-va-lent pro-per-ties ca-te-go-ries ques-tion mo-dule
e-print auto-equi-valence equi-va-ri-an-ti-za-tion}

\title[Multi-fusion categories of Harish-Chandra bimodules]{Multi-fusion categories of Harish-Chandra bimodules}

\author[Victor Ostrik]{Victor Ostrik}

\begin{document}
\begin{abstract} We survey some results on tensor products of irreducible Harish-Chandra
bimodules. It turns out that such tensor products are semisimple in suitable Serre quotient
categories. We explain how to identify the resulting semisimple tensor categories and describe
some applications to representation theory.
\end{abstract}

\begin{classification}
Primary 17B35, 18D10; Secondary 22E47,14F05.
\end{classification}

\begin{keywords}
Harish-Chandra modules, tensor categories, finite $W-$algebras.
\end{keywords}

\maketitle

%%%%%%%%%%%%%%%%%%%%%%%%%%%%%%%%%%%%%%%%%%%%%%%%%%
%%%%%%%%%%%%%%%%%%%%%%%%%%%%%%%%%%%%%%%%%%%%%%%%%%
\section{Introduction}
The notion of tensor category is ubiquitous in representation theory. A classical example is the
theory of Tannakian categories (see \cite{SR, De}) which shows that a linear algebraic group can be
recovered from the tensor category of its finite dimensional representations. The tensor product in
a Tannakian category is commutative in a very strong sense. In this note we will be interested
in tensor categories for which the tensor product is not assumed to be commutative. One reason
for the relevance of such categories to representation theory is very simple: the category of
bimodules over an arbitrary algebra $A$ is a tensor category with tensor product given by tensoring
over $A$ and this tensor product is non-commutative in general.

A classical notion in the representation theory of a complex semisimple Lie algebra $\fg$ 
is that of {\em Harish-Chandra bimodules}.
These objects were introduced by Harish-Chandra \cite{HC} in order to reduce some questions
of continuous representation theory of complex semisimple groups (considered as real Lie
groups) to pure algebra. A number of deep results on Harish-Chandra bimodules are known,
see e.g. \cite{BG, Ja, So}. In this note we will be interested in just one aspect of the theory, namely,
in the structure of the tensor category of Harish-Chandra bimodules. Some significant steps
towards a complete description of this category were made in \cite{So}, however due to non-semisimplicity
this description is necessarily quite complicated. It turns out that the classical notion of
{\em associated variety} (see e.g. \cite{Jo, Vo}) provides us with a kind of filtration on this category;
moreover one can define an ``associated graded'' category with respect to this filtration which is much
simpler than the original category but still carries an important information about the category
of Harish-Chandra bimodules. Thus one defines certain interesting semisimple subquotients 
of the tensor category Harish-Chandra bimodules associated with various nilpotent
orbits in $\fg$ which we call {\em cell categories}, see Section \ref{cellc}. The idea of this definition
can be traced back to the work of Joseph \cite{Jotc} and the name is justified
by the connection with the theory of Kazhdan-Lusztig cells \cite{KL}. A nice property of the cell
categories is that they are {\em multi-fusion} in a sense
of \cite{ENO}. The known results from the theory of multi-fusion categories turned out to be powerful
enough to identify these categories with some categories constructed from some finite groups. 
This is interesting in its own right but also gives a better understanding of some notions of
representation theory such as Lusztig's quotients and Lusztig's subgroups.  

One can hope to apply the ideas above in the following way.
The Harish-Chandra bimodules act on various categories of $\fg-$modules via tensoring over the
universal enveloping algebra of $\fg$. We can exploit this action restricted to the semisimple
subquotients as above in order to obtain interesting information about such categories of $\fg-$modules.
One example of such application is the theory of finite $W-$algebras  where this strategy
allowed to obtain the information on the number of finite dimensional simple modules,
see Section \ref{W}. It was suggested by Bezrukavnikov that similar approach might work 
for Harish-Chandra modules, see Section \ref{hcm}.

The cell categories described above can be realized via truncated convolution of some perverse sheaves 
on the flag variety associated with $\fg$. One advantage of this description is a greater flexibility. For
example we can replace the complex semisimple Lie algebra $\fg$ by a semisimple algebraic group
$G$ defined over a field of arbitrary characteristic. Moreover, using the tensor categorical construction
of the {\em Drinfeld center} one connects the cell categories with the theory of {\em character sheaves}
on $G$, see \cite{BN,BFO2,Lutrun}. We describe briefly these developments in Section \ref{CSh}.

This paper is organized as follows. In Section \ref{mfc} we review briefly some notions of the
theory of tensor categories. In Section \ref{HCb} we introduce the Harish-Chandra bimodules and
define the cell categories. In Section \ref{W} we explain how to use Whittaker modules and
Premet's $W-$algebras in order to establish some basic properties of the cell categories.
Conversely we show that the actions of the cell categories can be used in order to get an
information about finite dimensional representations of $W-$algebras. Finally in Section \ref{CSh}
we describe the interaction of the cell categories and some classes of sheaves on algebraic
varieties associated with $\fg$.

My understanding of the subject described in this note was deeply influenced by collaborations with
Roman Bezrukavnikov, Pavel Etingof, Michael Finkelberg, Ivan Losev, and Dmitri Nikshych. It is
my great pleasure to express my sincere gratitude and appreciation to them. Thanks are also due
to Jonathan Brundan, Victor Ginzburg, and Ivan Losev for useful comments on a preliminary version of this paper.

\section{Multi-fusion categories} \label{mfc}
\subsection{Monoidal categories} For the purposes of this note a {\em monoidal category} is a 
quadruple $(\cC, \ot, a, \be)$
where $\cC$ is a category, $\ot : \cC \times \cC \to \cC$ is a bifunctor (called {\em tensor product}), $a$ is a
natural isomorphism $a_{X,Y,Z}: (X\ot Y)\ot Z\simeq X\ot (Y\ot Z)$ (so $a$ is called {\em associativity
constraint}), $\be \in \cC$ is an object (called {\em unit object})
such that the following axioms hold:

1. Pentagon axiom: the following diagram commutes for all $W,X,Y,Z\in \cC$:
$$\xymatrix{
&((W\ot X)\ot Y)\ot Z  \ar[dl]^{a_{W,X,Y}\otimes \id_Z}
\ar[dr]_{a_{W\otimes X,Y,Z}}& \\
(W \ot (X \ot Y))\ot Z \ar[d]^{a_{W, X\ot Y, Z}} && 
(W \ot X) \ot (Y \ot Z) \ar[d]_{a_{W, X, Y\ot Z}}  \\
W \ot ((X\ot Y) \ot Z)  \ar[rr]^{\id_W \ot a_{X,Y,Z}} &&
W \ot (X \ot (Y \ot Z)) 
}$$

2. Unit axiom: both functors $\be \ot ?$ and $?\ot \be$ are isomorphic to the identity functor.

It is well known that this definition of monoidal category reduces to the traditional one 
(see e.g. \cite{Mac}) if we  fix an isomorphism $\be \ot \be \simeq \be$. 

Further one defines natural notions of tensor functors and tensor equivalences, see e.g. \cite{Mac}.
From a practical point of view tensor equivalent monoidal categories are indistinguishable. 

A basic example of monoidal category is category of $R-$bimodules over a ring (with unity) $R$. 
In this case
the tensor product is tensor product $\ot_R$ over $R$, the unit object is $R$ considered as a bimodule,
and the associativity constraint is the obvious one. A closely related example is a category of endofunctors of a category with tensor product given by the composition. Also modules
over a {\em commuative} ring form a monoidal category, most familiar example being the
category of vector spaces over a field $k$.

Here is another more abstract example. 
\begin{example}[\cite{Sin}] \label{Sinh}
Let $A$ be a group and let $S$ be an abelian group, both written multiplicatively.
We consider the category where the objects are elements of $A$ and the morphisms are given by
$\Hom(g,h)=\emptyset$ if $g\ne h$ and $\Hom(g,g)=S$ for any $g$. We have a bifunctor
$g\ot h=gh$ and $\alpha \ot \beta =\alpha \beta$ for objects $g,h \in A$ and morphisms 
$\alpha,\beta \in S$. The associativity constraint amounts to a morphism
$\omega_{g,h,k}\in \Hom(ghk, ghk)=S$ for any three elements $g,h,k\in A$. One verifies
that the pentagon axiom reduces to the equation $\partial \omega =1$ which says that $\omega$
is a 3-cocycle on $A$ with values in $S$. Moreover, any 2-cochain $\psi$ determines a tensor
structure on the identity functor between tensor categories with associativity constraints given
by 3-cocycles $\omega$ and $\omega \cdot \partial \psi$. We see that monoidal structures on
our category are parameterized by the cohomology group $H^3(A,S)$.
\end{example}

We explain now that nontrivial associativity constraints do appear in tensor categories
of bimodules. 
\begin{example} \label{out}
Let $R$ be an algebra over a field $k$ with trivial center. Recall that $R-$bimodule $M$
is {\em invertible} if there exists a $R-$bimodule $N$ such that $M\ot_RN\simeq N\ot_RM\simeq R$.
The invertible bimodules form a tensor category with respect to $\ot_R$ (morphisms being the {\em
isomorphisms} of bimodules). This category is equivalent to the category of type described in 
Example \ref{Sinh}.
The group of automorphisms of any object is $k^\times$, so 
the associativity constraint determines a class $\omega \in H^3(\Pic(R),k^\times)$ where $\Pic(R)$ is the group
of isomorphism classes of invertible $R-$bimodules (this is the non-commutative
{\em Picard group} of $R$).
This class is often nontrivial. Indeed for any automorphism $\phi$ of $R$ we can define 
invertible bimodule $R_\phi$ as follows: $R_\phi =R$ as a vector space and the action
is given by $(a,b)\cdot c:=ac\phi(b)$. The bimodule $R_\phi$ is isomorphic to $R$ if and only if
$\phi$ is inner, so we get a well known embedding $\Out(R)\subset \Pic(R)$. Now assume that
$\phi$ is outer automorphism such that $\phi^2$ is inner, that is $\phi^2(x)=gxg^{-1}$ for some
invertible element $g\in R$ and all $x\in R$; 
thus $\phi$ generates subgroup $\BZ/2\BZ \subset \Out(R)\subset \Pic(R)$.
 It is easy to see that then $\phi(g)=\pm g$; we leave it to the reader to check that the restriction
 of the class $\omega$ to $\BZ/2\BZ$ is nontrivial if and only if $\phi(g)=-g$. Here is an example
 when this is the case:
 $$R=\BC \langle g,x,y\rangle /(xy-yx-1, g^2-1, gx+xg, gy+yg), \;\; \phi(g)=-g, \phi(x)=-y, \phi(y)=x.$$
 \end{example} 
 
 \begin{remark} For a commutative algebra $R$ one defines the category of invertible $R-$modules
 similarly to Example \ref{out}. Since we have canonically $M\ot_RN\simeq N\ot_RM$, this category
 has an additional structure of {\em symmetric tensor category}. This implies that the cohomology
 class representing the associativity constraint (see Example \ref{Sinh}) is always trivial, see 
 \cite[Section 7]{Dugger}.
 \end{remark}

A crucially important technical assumption on a monoidal category $\cC$ is that of {\em rigidity}. We recall
that for an object $X\in \cC$ its {\em right dual} is an object $X^*\in \cC$ together with {\em evaluation}
and {\em coevaluation} morphisms $\ev_X: X^*\ot X\to \be$ and $\coev_X: \be \to X\ot X^*$ such that
the composition
$$X \xrightarrow{\coev_X\ot \id_X} (X\ot X^*)\ot X 
\xrightarrow{a_{X,X^*,X}}
X\ot (X^*\ot X) \xrightarrow{\id_X\ot \ev_X} X$$
equals the identity morphism and the composition
$$X^* \xrightarrow{\id_{X^*}\ot \coev_X} X^*\ot (X\ot X^*) \xrightarrow{a^{-1}_{X,X^*,X}} 
 (X^*\ot X)\ot X^* \xrightarrow{\ev_X \ot \id_{X^*}} X^* $$
equals the identity morphism. Similarly, a {\em left dual} of $X$ is ${}^*X\in \cC$ such that $X$ is right
dual of ${}^*X$. A monoidal category $\cC$ is {\em rigid} if any object of $\cC$ has both left and right
duals. 

\begin{example} A vector space considered as an object of the monoidal category of vector spaces
has left or right dual if and only if it is finite dimensional. A bimodule over an algebra $A$ has
a right dual if and only if it is finitely generated projective when considered as a left $A-$module.
The category considered in Example \ref{Sinh} is always rigid.
\end{example}

\subsection{Semisimplicity}
We will fix an algebraically closed field $k$ of characteristic zero.
We recall that a $k-$linear category $\cC$ is called {\em semisimple} if 
there is a collection $\{ L_i\}_{i\in J}$ of objects of $\cC$ such that

(i) $\dim_k\Hom(L_i,L_j)=\delta_{ij}=\left\{ \begin{array}{cc}1&\mbox{if}\; i=j;\\ 0&\mbox{if}\; i\ne j.\end{array}\right.$

(ii) any object of $\cC$ is isomorphic to a finite direct sum of the objects $L_i$ and any direct
sum (including the empty one) is contained in $\cC$.

For example if $\cA$ as a $k-$linear abelian category with finite dimensional spaces of morphisms
the category of semisimple objects in $\cA$ (that is the full subcategory consisting of direct sums of
simple objects) is semisimple.

Clearly the isomorphism classes of the objects $L_i$ are uniquely determined by the category $\cC$.
These objects are {\em simple objects} of $\cC$ (note that zero object is not simple).

For a semisimple category $\cC$ let $K(\cC)$ be its {\em Grothendieck group}; this is a free abelian
group with basis $[L_i]_{i\in J}$. For any $M\in \cC$ we have its {\em class}:
$$[M]:=\sum_{i\in J}\dim_k\Hom(L_i,M)[L_i]\in K(\cC).$$

We say that a semisimple category is {\em finite} if the isomorphism classes of simple objects
form a finite set.

\begin{definition} A {\em multi-fusion category} over $k$ is a $k-$linear rigid monoidal category 
which is finite semisimple. A {\em fusion category} is a multi-fusion category such that the unit
object is simple.
\end{definition}

Let $\be =\oplus_{i\in I}\be_i$ be the decomposition of the unit object of a multi-fusion category
into the  sum of simple objects.
One shows that the objects $\be_i$ are ``orthogonal idempotents'', that is $\be_i\otimes \be_j\simeq 
\delta_{ij}\be_i$. For any simple object $L\in \cC$ there are unique $i,j\in I$ such that
$\be_i\ot L=L=L\ot \be_j$. We say that a multi-fusion category $\cC$ is {\em indecomposable}
if for any pair $i,j\in I$ there exists a simple $L$ with $L=\be_i\ot L\ot \be_j$. One shows that
any multi-fusion category naturally decomposes into a unique direct sum of indecomposable ones.

The Grothendieck group of a multi-fusion category has a natural structure of a ring 
via $[M]\cdot [N]=[M\ot N]$. The {\em Grothendieck ring} $K(\cC)$ of a multi-fusion category
$\cC$ together with its basis consisting of the classes of simple objects is a {\em based ring}
in the sense of \cite{Lulead}.

\begin{remark} The Grothendieck ring $K(\cC)$ of a multi-fusion category $\cC$ together with
its basis determines the tensor product and the unit object in $\cC$ uniquely up to
isomorphism. Thus the only part of information describing $\cC$ and not contained in $K(\cC)$
is the associativity constraint.
\end{remark}

\begin{example} \label{multex}
(i) Let $A$ be a finite group. Consider a fusion category where simple objects $L_g$
are labeled by $g\in A$ and $L_g\ot L_h\simeq L_{gh}$. Similarly to Example \ref{Sinh} the possible
associativity constraints in this category are classified by $H^3(A, k^\times)$. We will denote
such a category with associativity constraint given by $\omega \in H^3(A, k^\times)$ by 
$\Vec_A^\omega$; we set $\Vec_A=\Vec_A^{\omega_0}$ where $\omega_0$ is the neutral element
of $H^3(A, k^\times)$.
The Grothendieck ring $K(\Vec_A^\omega)$ is the group ring $\BZ[A]$ with a basis
$\{ g\}_{g\in A}$.

(ii) Let $R$ be a finite dimensional semisimple $k-$algebra, e.g. $R=k\oplus k$. Then the category 
of finite dimensional $R-$bimodules with tensor product $\ot_R$ is a multi-fusion category.
Its Grothendieck ring is the ring of matrices over $\BZ$ with a basis consisting of matrix units.

(iii) Let $A$ be a finite group and $Y$ is a finite set on which $A$ acts. Consider
the category $\Coh_A(Y\times Y)$ of finite dimensional $A-$equivariant vector bundles 
on the set $Y\times Y$. This category has a natural {\em convolution tensor product} defined
as follows.
Let $p_{ij}: Y\times Y\times Y\to Y\times Y, i,j\in \{ 1,2,3\}$ be the various projections; then for
$F_1, F_2\in \Coh_A(Y\times Y)$ we set $F_1*F_2=p_{13*}(p_{12}^*(F_1)\otimes p_{23}^*(F_2))$
(here $p_{ij*}$ and $p_{ij}^*$ are the functors of direct and inverse image and $\ot$ is the pointwise
tensor product). Then the bifunctor $*$ has a natural associativity constraint and thus
$\Coh_A(Y\times Y)$ is a multi-fusion category.
In the special case of trivial $A$ we get example (ii) above; in the case when $Y$ consists of one
point we get the category $\Rep(A)$ of representation of $A$; if the action of $A$ on $Y$ is free
and transitive we get category $\Vec_A$ from (i).
\end{example}

\subsection{Module categories and dual categories} \label{mdc}
Let $\cC$ be a monoidal category and let $\cM$ be a category. We say that $\cC$ {\em acts} on $\cM$
if we are give a tensor functor from $\cC$ to the category of endofunctors of $\cM$. Equivalently,
we have a bifunctor $\cC \times \cM \to \cM, (X,M)\mapsto X\ot M$ endowed with the natural associativity 
isomorphism $(X\ot Y)\ot M\simeq X\ot (Y\ot M)$ such that the counterpart of the pentagon axiom holds
and the functor $\be \ot ?: \cM \to \cM$ is isomorphic to the identity functor. Thus in such situation 
we often say that $\cM$ is a {\em module category} over $\cC$. Further one defines module functors
between module categories and, in particular, the equivalences of module categories, see \cite{Omc}.

{\bf Convention.} In the case when both categories $\cC$ and $\cM$ are $k-$linear we will consider
only $k-$linear actions. If $\cC$ is a multi-fusion category all module categories over $\cC$ are assumed
to be finite semisimple and non-zero.

\begin{example} \label{modex}
(i) Let $Y$ be a finite set with an action of a finite group $A$. Then 
the category $\cM =\Coh(Y)$ of finite dimensional vector bundles on $Y$ has an obvious structure of
module category over $\Vec_A$. It is easy to recover the $A-$set $Y$ from $\cM$: the set $Y$
is just the set of isomorphism classes of simple objects in $\cM$ and the action of $A$ is recovered 
by considering the action of simple objects of $\Vec_A$ on simple objects of $\cM$.

(ii) We can generalize the example above to the category
$\Vec_A^\omega$ from Example \ref{multex} (i). Pick a cocycle $\tilde \omega$ representing $\omega$.
Let $B\subset A$ be a subgroup and
let $\psi$ be a 2-cochain on $B$ such that $\partial \psi =\tilde \omega |_B$. Then $\psi$ determines 
an multiplication morphism $R_B\ot R_B\to R_B$ where $R_B=\oplus_{g\in B}L_g$; moreover
this morphism makes $R_B$ into {\em associative algebra in the category} $\Vec_A^\omega$. 
Let $\cM=\cM(B,\psi)$ be the category of right $R_B-$modules in the category $\Vec_A^\omega$; 
then the left tensoring with object of $\Vec_A^\omega$ makes $\cM$ into module category
over $\Vec_A^\omega$. Note that the simple objects of $\cM$ are naturally labeled by the cosets
$A/B$; moreover the action of simple objects of $\Vec_A^\omega$ on simple objects of $\cM$ 
is the same as the action of $A$ on $A/B$. Thus we consider the module category $\cM(B,\psi)$ as 
a cohomologically twisted version of the action of $A$ on $A/B$. More generally one can consider
a direct sum of module categories of the form $\cM(B,\psi)$; this is a twisted version of the action
of $A$ on a finite set. We will use for such module categories the notation $\cM=\Coh(Y)$ where it
is understood that the ``set'' $Y$ carries the cohomological information describing the module
category $\cM$ (thus $Y$ is completely determined when a finite collection of pairs $(B,\psi)$ as
above is specified). We refer the reader to \cite[4.2]{BO} for the notion of ``A-set of centrally 
extended points'' which is a formalization of the cohomological data above in the special case 
when $\tilde \omega$ is trivial. Such notions are important since 
it is known that any module category over $\Vec_A^\omega$ is
equivalent as a module category to $\Coh(Y)$ where $Y$ is such cohomologically twisted
$A-$set, see \cite[Example 2.1]{Odr}. 
\end{example}

Let $\cM$ be a module category over an indecomposable multi-fusion category $\cC$. Then one
defines the dual category $\cC_\cM^*$ to be the category of all endofunctors of $\cM$ which commute
with the action of $\cC$, see \cite[4.2]{Omc}. The category $\cC_\cM^*$ has a natural monoidal
structure where the tensor product is given by the composition of functors.
It is known that the category $\cC_\cM^*$ is again 
an indecomposable multi-fusion 
category, see \cite[Theorem 2.18]{ENO} (this result fails if $k$ is allowed to have positive characteristic).

\begin{example} \label{dualex}
Let $\cC=\Vec_A$ and let $\cM$ be as in Example \ref{modex} (i). In this case the category
$\cC_\cM^*$ is precisely the category $\Coh_A(Y\times Y)$ from Example \ref{multex} (iii).
Thus using module categories from Example \ref{modex} (ii) we get a cohomologically
twisted version of the category $\Coh_A(Y\times Y)$. We will use similar notation
$(\Vec_A^\omega)^*_\cM=\Coh_{A,\omega}(Y\times Y)$ where it is understood that
the $A-$set $Y$ is cohomologically twisted as in Example \ref{modex} (ii).
We note that the that direct summands $\{ \be_i\}_{i\in I}$ in the decomposition of the unit object $\be \in
\Coh_A(Y\times Y)$ are precisely the projection functors from the module category $\cM$ 
to its indecomposable
direct summands; in particular the set $I$ is in natural bijection with the set of such summands. 
\end{example}

The category $\cC^*_\cM$ consists of endofunctors of $\cM$; thus it acts in an obvious way on $\cM$. 
The following result justifies the terminology:

\begin{theorem}[see Remark 2.19 in \cite{ENO}] \label{doublec}
Let $\cC$ be an indecomposable multi-fusion category and let $\cM$ be a module
category over $\cC$. Then $\cC_\cM^*$ is also indecomposable multi-fusion and the natural
functor $\cC \to (\cC_\cM^*)^*_\cM$ is an equivalence of tensor categories.
\end{theorem}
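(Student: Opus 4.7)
The plan is to exhibit the canonical tensor functor $F\colon \cC \to (\cC_\cM^*)^*_\cM$ explicitly, and then show it is an equivalence by a combination of fully faithfulness and a Frobenius--Perron dimension count. On objects, $F$ sends $X$ to the endofunctor $L_X \colon M \mapsto X \ot M$; on morphisms, $F(f) := f \ot \id$. The first point to verify is that $L_X$ really lies in $(\cC_\cM^*)^*_\cM$: by definition, every $T \in \cC_\cM^*$ is a $\cC$-module endofunctor of $\cM$, hence comes equipped with natural isomorphisms $T(X \ot M)\simeq X \ot T(M)$; these are exactly the data saying that $L_X$ commutes with $T$. The monoidal structure on $F$ is induced by the associator of the action of $\cC$ on $\cM$, and the coherence axiom for $F$ reduces to the pentagon in $\cC$ acting on $\cM$.

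Next I would reduce the equivalence claim to fully faithfulness via dimension counting. Applying the first part of Theorem 2.18 of \cite{ENO} twice, $\cC_\cM^*$ and then $(\cC_\cM^*)^*_\cM$ are both indecomposable multi-fusion. The standard identity $\FPdim(\cC_\cM^*) = \FPdim(\cC)$ (for an indecomposable $\cM$) applied twice yields $\FPdim((\cC_\cM^*)^*_\cM) = \FPdim(\cC)$. Since a fully faithful tensor functor between semisimple rigid monoidal categories of equal Frobenius--Perron dimension is automatically essentially surjective (every simple in the target must appear in the image for dimension reasons), it is enough to establish fully faithfulness of $F$.

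For fully faithfulness the cleanest route is Morita-theoretic. Choose a progenerator $M \in \cM$, form the internal endomorphism algebra $A := \uHom(M,M)$ in $\cC$, and use the standard identification $\cM \simeq \Mod_{\cC}(A)$ as a $\cC$-module category, together with the corresponding monoidal identification $\cC_\cM^* \simeq \Bimod_\cC(A)$. Under this dictionary, $(\cC_\cM^*)^*_\cM$ becomes the category of $A$-bimodule-linear endofunctors of $\Mod_\cC(A)$; an Eilenberg--Watts-type argument identifies such functors with left tensoring by an object of $\cC$. Tracking morphism spaces through these identifications shows that $F$ is fully faithful (in fact, essentially surjective as well, giving the equivalence directly).

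The hard part is the Morita identification used in the last paragraph: constructing $A = \uHom(M,M)$, establishing the equivalences $\cM \simeq \Mod_\cC(A)$ and $\cC_\cM^* \simeq \Bimod_\cC(A)$, and making sure the $\cC$-balanced tensor operations and the associativity constraints (which can involve a nontrivial 3-cocycle) are handled correctly. A dimension-counting shortcut removes the need for essential surjectivity but does not remove the need to understand morphisms in $(\cC_\cM^*)^*_\cM$ explicitly enough to prove $F$ is faithful, and this still rests on having enough $\cC_\cM^*$-endofunctors to separate the simples of $\cC$ --- which is ultimately a reformulation of the internal-Hom construction.
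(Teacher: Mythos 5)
First, a point of comparison: the paper does not prove this statement at all --- it imports it from \cite[Remark 2.19]{ENO}, where the result is deduced from the realization of multi-fusion categories as representation categories of semisimple weak Hopf algebras and the duality $(H^*)^*\cong H$. Your Morita-theoretic route via $A=\uHom(M,M)$, $\cM\simeq \Mod_\cC(A)$, $\cC^*_\cM\simeq \Bimod_\cC(A)$ and an Eilenberg--Watts argument is a legitimate alternative (it is essentially how the result is proved in later expositions), so the choice of strategy is sound. But as written the proposal has two genuine gaps.

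The first is the dimension count. The identity $\FPdim(\cC^*_\cM)=\FPdim(\cC)$ is \emph{false} in the multi-fusion setting, where $\FPdim(\cC)$ is the sum of squares of dimensions of simples: take $\cC=\Vec$ and $\cM=\Coh(Y)$ for a finite set $Y$ with $|Y|=n$; then $\cC^*_\cM=\Coh(Y\times Y)$ has $\FPdim$ equal to $n^2$, not $1$. The identity holds only when $\cC$ is \emph{fusion} and $\cM$ is indecomposable over $\cC$; in the generality of the statement ($\cC$ multi-fusion, $\cM$ possibly decomposable) both of your applications of it are invalid --- note also that in the example above the second dual has $\FPdim$ equal to $1$ again, so the failures happen to cancel, which is precisely what a correct argument must explain. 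The fix is to run the count blockwise, using that for indecomposable $\cC$ all component fusion categories $\be_i\ot\cC\ot\be_i$ have equal $\FPdim$ and matching the block decompositions of $\cC$ and of $\id_\cM$ in $(\cC^*_\cM)^*_\cM$ --- or to abandon the shortcut, since the Eilenberg--Watts identification, once established, gives essential surjectivity for free. The second gap you acknowledge yourself: full faithfulness of $F$, i.e.\ the identification of $\Bimod_\cC(A)$-linear endofunctors of $\Mod_\cC(A)$ with left tensoring by objects of $\cC$, is asserted rather than proved, and it is the actual content of the theorem; deferring it to ``an Eilenberg--Watts-type argument'' leaves the core step open (for instance, one must show such a functor is determined by its value on a generator together with the constraint of commuting with $-\ot_A B$, and handle the coherence data). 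Finally, nowhere do you use indecomposability of $\cC$, yet the statement fails without it: for decomposable $\cC$ a whole block $\be_i\ot\cC$ can act by zero on $\cM$, in which case $F$ kills it and cannot be an equivalence; indecomposability (via rigidity, since $\be_j$ is a summand of $X^*\ot X$ for simple $X$) is what guarantees the action is faithful, and this should appear explicitly in the faithfulness argument.
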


Let $F: \cC \to \cD$ be a tensor functor between indecomposable multi-fusion categories.
We say that $F$ is {\em injective} if it is fully faithful and {\em surjective} if it is dominant,
that is any object of $\cD$ is contained in $F(X)$ for suitable $X\in \cC$. Now let
$\cM$ be a module category over $\cD$. Then $\cM$ can be considered as a module category
over $\cC$ and we have a natural dual tensor functor $F^*:\cD^*_\cM \to \cC^*_\cM$. It is shown
in \cite[5.7]{ENO} that this duality interchanges injective and surjective functors. 

\begin{example} \label{surjF}
Let $A\xrightarrow{f}\bar A$ be a surjective homomorphism of finite groups
and let $\tilde \omega$ be a 3-cocycle representing class $\omega \in H^3(\bar A,k^\times)$ such
that $f^*(\omega)$ is zero element of $H^3(A,k^\times)$. Then any 2-cochain $\psi$ such
that $\partial \psi =f^*(\tilde \omega)$ defines a tensor structure on the functor $F: \Vec_A\to
\Vec_{\bar A}^\omega$ sending $L_g$ to $L_{f(g)}$. The functor $F$ is surjective.
Conversely, it is easy to see that any surjective tensor functor $\Vec_A\to \cC$ where $\cC$ is
a fusion category is isomorphic to the one of this form.
\end{example} 

It is easy to see that the category $\cC^*_\cM$ is fusion if and only if the module category $\cM$ is
not a nontrivial direct sum of module categories over $\cC$, that is $\cM$ is {\em indecomposable} 
over $\cC$.
We have the following consequence of the discussion above:

\begin{corollary}[see Lemma 3.1 in \cite{LO}] \label{cco}
Let $\cM$ be a module category over $\Vec_A$ and let $\cC$ be a full multi-fusion subcategory 
of $(\Vec_A)^*_\cM$ such that $\cM$ is indecomposable over $\cC$.
Then there exists a surjective functor $F: \Vec_A\to \Vec_{\bar A}^\omega$
such that the action of $\Vec_A$ on $\cM$ factors through $F$ and such that
$\cC=(\Vec_{\bar A}^\omega)^*_\cM \subset (\Vec_A)^*_\cM$.
\end{corollary}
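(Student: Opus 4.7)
The plan is to combine the duality between injective and surjective tensor functors recalled from \cite[5.7]{ENO} with the classification of surjective tensor functors out of $\Vec_A$ stated at the end of Example \ref{surjF}, stringing the two together via the double duality of Theorem \ref{doublec}.

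First, I would view $\cM$ as a module category over $\cC$ through the inclusion $\iota\colon \cC \hookrightarrow (\Vec_A)^*_\cM$, which is fully faithful since $\cC$ is a full subcategory. Thus $\iota$ is an injective tensor functor, and its dual with respect to $\cM$ is a surjective tensor functor $\iota^*\colon ((\Vec_A)^*_\cM)^*_\cM \to \cC^*_\cM$. Applying Theorem \ref{doublec} to identify $((\Vec_A)^*_\cM)^*_\cM$ with $\Vec_A$, I obtain a surjective tensor functor $F\colon \Vec_A \to \cC^*_\cM$. Concretely $F$ sends $X\in \Vec_A$ to the endofunctor $X\otimes(-)$ of $\cM$; this endofunctor does land in $\cC^*_\cM$ because every object of $\cC \subset (\Vec_A)^*_\cM$ is by definition a $\Vec_A$-module endofunctor, hence commutes with the $\Vec_A$-action.

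Next, I would use the hypothesis that $\cM$ is indecomposable over $\cC$ to conclude that the unit object of $\cC^*_\cM$ is simple, so $\cC^*_\cM$ is a fusion category (not merely multi-fusion). The converse direction in Example \ref{surjF} then identifies the surjective tensor functor $F$ with one of the form $\Vec_A \to \Vec_{\bar A}^\omega$, associated to a surjective homomorphism $A\to \bar A$ and a class $\omega \in H^3(\bar A, k^\times)$; in particular $\cC^*_\cM \simeq \Vec_{\bar A}^\omega$. By construction of $F$, the action of $\Vec_A$ on $\cM$ is recovered by composing $F$ with the tautological action of $\cC^*_\cM$ on $\cM$, so it factors through $F$ as required. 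Applying Theorem \ref{doublec} a second time then gives $\cC \simeq (\cC^*_\cM)^*_\cM = (\Vec_{\bar A}^\omega)^*_\cM$ inside $(\Vec_A)^*_\cM$, which is the final assertion.

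The main obstacle is the classification step drawn from Example \ref{surjF}: one has to verify that every surjective tensor functor $\Vec_A \to \cD$ with $\cD$ fusion has the prescribed $(\bar A,\omega)$ form. The key points are that the set of isomorphism classes of simples in the image forms a group quotient $\bar A$ of $A$, that the kernel is normal because it is closed under conjugation in $A$, and that the tensorator of $F$, measured against a set-theoretic section $\bar A \to A$, is a $2$-cochain whose coboundary equals the pullback of a $3$-cocycle representing the associativity of $\cD$; this is precisely the data $(\bar A, \omega)$ of Example \ref{surjF}. Everything else in the argument is formal invocation of results already stated in the paper.
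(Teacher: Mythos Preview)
Your proof is correct and follows essentially the same route as the paper's own proof: take the injective embedding $\cC \hookrightarrow (\Vec_A)^*_\cM$, dualize to get a surjective functor $\Vec_A \simeq ((\Vec_A)^*_\cM)^*_\cM \to \cC^*_\cM$, observe that $\cC^*_\cM$ is fusion since $\cM$ is indecomposable over $\cC$, and invoke Example~\ref{surjF}. You have simply spelled out more of the details (including the second application of Theorem~\ref{doublec} and the verification underlying Example~\ref{surjF}) that the paper leaves to the reader.
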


\begin{proof} Let $G: \cC \to (\Vec_A)^*_\cM$ be the embedding functor; clearly it is injective.
 Then the dual functor $G^*: ((\Vec_A)^*_\cM)^*_\cM \to \cC^*_\cM$ is surjective. By Theorem
 \ref{doublec} we have $((\Vec_A)^*_\cM)^*_\cM =\Vec_A$ and the category $\cC^*_\cM$ is fusion.
 By Example \ref{surjF} the result follows.
\end{proof}

The module categories over a fixed indecomposable multi-fusion category $\cC$ form a 2-category,
where the morphisms are the module functors and 2-morphisms are the natural transformations
of the module functors. This 2-category is semisimple in the following sense: for any module
categories $\cM_1$ and $\cM_2$ the category of module functors $\Fun_\cC(\cM_1,\cM_2)$ 
from $\cM_1$ to $\cM_2$ is finite semisimple, see \cite[Theorem 2.18]{ENO}). It is clear that
the composition of functors makes $\Fun_\cC(\cM_1,\cM_2)$ into a module category over
$\Fun_\cC(\cM_1,\cM_1)=\cC^*_{\cM_1}$. One shows that the 2-functor $\Fun_\cC(\cM,?)$ is a 2-equivalence
of 2-categories of module categories over $\cC$ and over $\cC^*_\cM$, see \cite[Proposition 2.3]{Odr}
or \cite{Mu1}. For example this means that there is one to one correspondence between the
module categories over $\Coh_{A,\omega}(Y\times Y)$ and over $\Vec_A^\omega$; moreover
to compute the module functors between the module categories over $\Coh_{A,\omega}(Y\times Y)$
we can compute the module functors between the corresponding module categories over 
$\Vec_A^\omega$.

\begin{example} Let $\cM_1=\cM(B_1,\psi_1)$ and $\cM_2=\cM(B_2,\psi_2)$ be the module categories over
$\cC=\Vec_A$ as in Example \ref{modex} (ii). Assume that $\psi_1$ and $\psi_2$ are both trivial. 
Then the category $\Fun_\cC(\cM_1,\cM_2)$ identifies with the category $\Coh_{B_1}(A/B_2)$ of
$B_1-$equivariant vector bundles on the $B_1-$set $A/B_2$, see e.g. \cite[Proposition 3.2]{Odr}.
\end{example}

In general it is difficult to find a number of simple objects in the category $\Fun_\cC(\cM_1,\cM_2)$.
Here is a special case when this is possible to do. Let $\be =\oplus_{i\in I}\be_i$
be the decomposition of the unit object of $\cC$ into simple summands. Let $\cC\ot \be_i$ be the
full subcategory of $\cC$ consisting of objects $X$ such that $X\ot \be_i\simeq X$. It is clear
that $\cC \ot \be_i$ is stable under the left multiplications by objects from $\cC$. Thus $\cC \ot \be_i$
is a module category over $\cC$. Note that for any module category $\cM$ over $\cC$ the Grothendieck
group $K(\cM)$ is naturally a module over the Grothendieck ring $K(\cC)$.

\begin{lemma}[Lemma 3.4 in \cite{LO}] \label{214}
Let $\cM$ be a module category over a multi-fusion
category $\cC$. Then the number of simple objects in the category $\Fun_\cC(\cC \ot \be_i,\cM)$ equals
the dimension of $\Hom_{K(\cC)}(K(\cC\ot \be_i),K(\cM))$.
\end{lemma}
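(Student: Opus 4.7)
The plan is to show that both sides of the claimed equality compute the same quantity, namely the number of simple objects $M$ of $\cM$ satisfying $\be_i \ot M \simeq M$. For the left-hand side, I would establish an equivalence of $k$-linear categories $\Fun_\cC(\cC \ot \be_i, \cM) \simeq \be_i \ot \cM$, where by $\be_i \ot \cM$ I mean the full subcategory of $\cM$ consisting of objects $M$ with $\be_i \ot M \simeq M$. For the right-hand side, I would use that $[\be_i]$ is an idempotent in $K(\cC)$ to express the Hom-group as the corresponding ``spectral component'' of $K(\cM)$.

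For the first step, given a module functor $F: \cC \ot \be_i \to \cM$, the orthogonality $\be_i \ot \be_j \simeq \delta_{ij}\be_i$ yields $\be_i \ot F(\be_i) \simeq F(\be_i \ot \be_i) \simeq F(\be_i)$, so $F(\be_i) \in \be_i \ot \cM$. Conversely, for $N \in \be_i \ot \cM$ the assignment $X \mapsto X \ot N$ is a well-defined module functor $\cC \ot \be_i \to \cM$ (note that if $X \in \cC \ot \be_i$ then $X \ot N \simeq X \ot \be_i \ot N \simeq X \ot N$, and the module structure is inherited from associativity of $\ot$). The key observation making these constructions mutually quasi-inverse is that every object of $\cC \ot \be_i$ has the form $X \ot \be_i$ (indeed, $X \in \cC \ot \be_i$ means $X \simeq X \ot \be_i$), so a module functor on $\cC \ot \be_i$ is recovered from its value at $\be_i$ via $F(X) = F(X \ot \be_i) \simeq X \ot F(\be_i)$. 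Thus the number of simples in $\Fun_\cC(\cC \ot \be_i, \cM)$ equals the number of simple objects of $\cM$ lying in $\be_i \ot \cM$.

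For the second step, a simple object $L \in \cC$ satisfies $L \ot \be_i \simeq L$ exactly when $[L]\cdot[\be_i] = [L]$, so $K(\cC \ot \be_i)$ is identified with the left $K(\cC)$-submodule $K(\cC)\cdot [\be_i] \subset K(\cC)$. Since $[\be_i]^2 = [\be_i]$, the evaluation map $\phi \mapsto \phi([\be_i])$ identifies $\Hom_{K(\cC)}(K(\cC)\cdot[\be_i],\,K(\cM))$ with the set $\{v \in K(\cM) : [\be_i]\cdot v = v\}$. Since $K(\cM) = \bigoplus_{j \in I} [\be_j]\cdot K(\cM)$ is the decomposition corresponding to $\cM = \bigoplus_j \be_j \ot \cM$, this set is a free abelian group whose rank equals the number of simple objects of $\cM$ contained in $\be_i \ot \cM$.

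Combining both steps gives the lemma. I do not foresee a genuine obstacle; the only point requiring some care is the verification that the pseudo-inverse equivalence in step one is well defined as a functor between $k$-linear categories (as opposed to just a bijection on objects), and that natural transformations of module functors correspond to morphisms in $\be_i \ot \cM$ — both of which follow routinely from the definition of a module functor, once one uses $X \simeq X \ot \be_i$ to reduce everything to the value at $\be_i$.
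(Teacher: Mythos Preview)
Your proposal is correct. The paper itself does not give a proof of this lemma, deferring instead to \cite[Lemma 3.4]{LO}; your argument is the natural one and matches the standard approach: identify $\Fun_\cC(\cC\ot\be_i,\cM)$ with $\be_i\ot\cM$ via evaluation at $\be_i$, and identify $\Hom_{K(\cC)}(K(\cC)\cdot[\be_i],K(\cM))$ with $[\be_i]\cdot K(\cM)$ via the idempotent $[\be_i]$, so that both sides count the simple objects $M$ of $\cM$ with $\be_i\ot M\simeq M$. One minor remark: your parenthetical ``$X\ot N\simeq X\ot\be_i\ot N\simeq X\ot N$'' is tautological as written; presumably you meant it as a sanity check that the functor $?\ot N$ restricted to $\cC\ot\be_i$ is insensitive to the choice of representative of $N$ in $\be_i\ot\cM$, which is immediate anyway.
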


\subsection{Drinfeld center} \label{drc}
One of the most important constructions in the theory of tensor categories
is the construction of {\em Drinfeld center}, see \cite{JS, Ma}. 
One definition in the spirit of Section \ref{mdc} is as follows. 
A monoidal category $\cC$ acts on itself by left and right multiplications,
so $\cC$ is a {\em bimodule category} over itself. Then the Drinfeld center $\cZ(\cC)$ of $\cC$ is 
the category of endofunctors of $\cC$ commuting with these actions. The composition makes 
$\cZ(\cC)$ into a monoidal category, but we have more structure here: $\cZ(\cC)$ is naturally a 
braided tensor category, see \cite{JS,Ma}. It is easy to see (\cite[2.3]{Odr})
that our definition is equivalent to the classical one: the objects of $\cZ(\cC)$ are pairs
$(X,\phi)$ where $X$ is an object of $\cC$ and $\phi$ is an isomorphism of functors
$X\ot ?\simeq ?\ot X$ satisfying some natural conditions, see \cite{JS,Ma,Mu2}. We have a natural
{\em forgetful functor} $\cZ(\cC)\to \cC$ sending $(X,\phi)$ to $X$. The right adjoint of this functor
(if it exists) is called the {\em induction functor}. 

It is known that the Drinfeld center of an indecomposable multi-fusion category is a fusion category,
see \cite[Theorem 2.15]{ENO} or \cite{Mu2}; in particular the induction functor exists in this case.  
Another important
property is the Morita invariance: for a module category $\cM$ we have a natural tensor
equivalence $\cZ(\cC^*_\cM)\simeq \cZ(\cC)$, see \cite[Corollary 2.6]{Odr} or \cite{Mu2}. 

\begin{example} \label{drex}
Recall that $\Coh_{A,\omega}(Y\times Y)$ is $(\Vec_A^\omega)^*_\cM$ for suitable $\cM$.
Thus we get a somewhat surprising result: $\cZ(\Coh_{A,\omega}(Y\times Y))$ does not depend on $Y$
and is equivalent to $\cZ(\Vec_A^\omega)$.
\end{example}

\section{Harish-Chandra bimodules} \label{HCb}
\subsection{Basic definitions}
Let $\fg$ be a complex semisimple Lie algebra. Let $U(\fg)$ be the universal enveloping algebra of $\fg$ and let $Z(\fg)\subset U(\fg)$ be the center of $U(\fg)$. Recall that a {\em central character} is a
homomorphism $\chi: Z(\fg)\to \BC$.
For a central character $\chi$
we have two sided ideal $U(\fg)\Ker(\chi)\subset U(\fg)$ and we will set 
$U_\chi:=U(\fg)/U(\fg)Ker(\chi)$.

Recall that for a $U(\fg)-$bimodule $M$ one defines
an adjoint $\fg-$action by the formula $ad(x)m:=xm-mx$; 
we will denote by $M_{ad}$ the space $M$ with this
action of $\fg$.

\begin{definition} A $U(\fg)-$bimodule $M$ is called $ad(\fg)$-algebraic if $M_{ad}$ can be
decomposed into a direct sum of finite dimensional $\fg-$modules. 
We say that an $ad(\fg)$-algebraic $U(\fg)-$bimodule $M$ is {\em Harish-Chandra bimodule}
if it is finitely generated as $U(\fg)-$bimodule.
\end{definition} 

\begin{remark} The definitions of Harish-Chandra bimodules in the literature (see e.g. \cite{Ja,Hu,So}) 
differ slightly from each other with $ad(\fg)-$algebraicity being the crucial part.
\end{remark}

\begin{example} \label{nounit}
 Consider $U(\fg)$ as $U(\fg)-$bimodule. Then the Poincar\'e-Birkhoff-Witt 
(or PBW) filtration
on $U(\fg)$ is stable under the adjoint action. Hence $U(\fg)$ is a Harish-Chandra bimodule.
Since $U(\fg)\ot U(\fg)$ is Noetherian any subquotient of Harish-Chandra bimodule is again
Harish-Chandra bimodule. Hence $I$ and $U(\fg)/I$ are Harish-Chandra bimodules 
for any two sided ideal $I\subset U(\fg)$. In particular $U_\chi$ is a Harish-Chandra bimodule.
\end{example} 

\begin{remark} \label{gK}
Assume that $\fg$ is a complexification of the Lie algebra of a real semisimple Lie
group $G_\BR$ with a maximal compact subgroup $K$. It was shown by Harish-Chandra that
the study of continuous representations of $G_\BR$ to a large extent reduces to the study of 
so called $(\fg,K)-$modules (or Harish-Chandra modules). We recall that a $(\fg,K)-$module
is a finitely generated $U(\fg)-$module endowed with compatible locally finite
action of $K$, see e.g. \cite[2.1(a)]{Vo}; thus this is a purely algebraic object.
The notion of Harish-Chandra bimodule is a special case of this when we take 
in the place of  $G_\BR$ a complex simply connected Lie group with 
Lie algebra $\fg$ considered as a real Lie group (so the complexified Lie algebra is isomorphic
to $\fg \oplus \fg$). 
\end{remark}

The following well known result is of crucial importance for this note:

\begin{lemma} If $M$ and $N$ are Harish-Chandra bimodules, then so is $M\ot_{U(\fg)} N$.
\end{lemma}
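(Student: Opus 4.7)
The plan is to verify the two defining properties of a Harish-Chandra bimodule for $P := M \otimes_{U(\fg)} N$: (a) that $P_{ad}$ is a direct sum of finite-dimensional $\fg$-modules, and (b) that $P$ is finitely generated as a $U(\fg)$-bimodule.

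For (a) I would first check that the formula $ad(x)(m \otimes n) := (ad(x)m)\otimes n + m \otimes (ad(x)n)$ descends to a well-defined $\fg$-action on $P$, which reduces to the identity $[x, ma]\otimes n + ma \otimes [x,n] = [x,m]\otimes an + m\otimes [x, an]$ for $a \in U(\fg)$. Then I note that the canonical surjection $M \otimes_{\BC} N \twoheadrightarrow P$ is $ad(\fg)$-equivariant. Since $M_{ad}$ and $N_{ad}$ are locally finite, $(M \otimes_\BC N)_{ad}$ is locally finite, and because $\fg$ is semisimple, Weyl's theorem on complete reducibility ensures that the decomposition into finite-dimensional summands passes to the quotient. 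So $P_{ad}$ is again a direct sum of finite-dimensional $\fg$-modules.

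For (b), the key preliminary is the standard fact that a Harish-Chandra bimodule is generated as a left (equivalently right) $U(\fg)$-module by a finite-dimensional $ad(\fg)$-invariant subspace. Concretely, if $V \subset M$ is a finite-dimensional $ad$-invariant subspace generating $M$ as a bimodule, then for $v \in V$ and $x \in \fg$ one has $vx = xv - ad(x)v \in U(\fg) V$, so by induction on PBW degree $VU(\fg) \subseteq U(\fg) V$; hence $M = U(\fg) V = V U(\fg)$. Choose such $V \subset M$ and analogously $W \subset N$, and let $Z \subset P$ be the image of $V \otimes W$; then $Z$ is finite-dimensional and $ad$-invariant. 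Given $m \otimes n \in P$, write $n = \sum_b u_b w_b$ with $w_b \in W$, so $m \otimes n = \sum_b (mu_b) \otimes w_b$; next write each $mu_b = \sum_a v_a x_{a,b}$ with $v_a \in V$; finally using $N = W U(\fg)$ write $x_{a,b} w_b = \sum_c w'_c y_c$ with $w'_c \in W$. Assembling, $m \otimes n = \sum_{a,b,c}(v_a \otimes w'_c) y_c$, which lies in the bimodule generated by $Z$. Hence $P$ is finitely generated as a bimodule.

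The main obstacle is the bookkeeping in (b): verifying that one can use the same fixed finite-dimensional subspaces $V$ and $W$ throughout, i.e.\ that commuting $U(\fg)$-factors past $V$ and past $W$ does not force one to enlarge these subspaces indefinitely. This is handled by the observation that $M = VU(\fg) = U(\fg)V$ (and similarly for $N$), which is the genuine content of the finite-generation proof and where $ad$-algebraicity enters in an essential way.
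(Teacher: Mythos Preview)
Your proof is correct and follows essentially the same approach as the paper's: both use the $ad(\fg)$-equivariance of the surjection $M\otimes_\BC N\twoheadrightarrow M\otimes_{U(\fg)}N$ for part (a), and both reduce part (b) to the observation that a finite-dimensional $ad$-invariant generating subspace already generates $M$ as a one-sided $U(\fg)$-module. You simply spell out the details the paper leaves as ``easy to see'' and ``clearly'', including the explicit PBW induction $VU(\fg)\subset U(\fg)V$ and the chain of rewritings showing the image of $V\otimes W$ generates.
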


\begin{proof} It is immediate from definitions that the canonical surjection $M\ot N\to M\ot_{U(\fg)} N$
commutes with the adjoint action. Hence $M\ot_{U(\fg)} N$ is $ad(\fg)-$algebraic. 

Let $M_0\subset M$ be a finite dimensional $ad(\fg)-$invariant subspace of $M$ generating
$M$ as $U(\fg)-$bimodule. It is easy to see that $M_0$ generates $M$ as left $U(\fg)-$module
and as right $U(\fg)-$module. Let $N_0\subset N$ be a similar subspace of $N$. Then 
the image of $M_0\ot N_0$ clearly generates $M\ot_{U(\fg)} N$.
\end{proof}

Let $\cH$ denote the category of Harish-Chandra bimodules (where the morphisms are
homomorphisms of bimodules). The tensor product over $U(\fg)$ with the obvious associativity
isomorphisms makes $\cH$ a tensor category with the unit object $U(\fg)$, see Example \ref{nounit}.
However this category has some unpleasant properties: the endomorphism algebra of $U(\fg)$ 
identifies with $Z(\fg)$, so the $\Hom-$spaces are infinite dimensional in general. 

\begin{remark} It is easy to see that for any $K$ as in Remark \ref{gK} the tensor product 
$M\ot_{U(\fg)} N$ of a Harish-Chandra bimodule
$M$ and $(\fg,K)-$module $N$ is again $(\fg,K)-$module. In other words, the category of Harish-Chandra bimodules acts naturally on the category of $(\fg,K)-$modules.
\end{remark}

\subsection{Irreducible Harish-Chandra bimodules} \label{BeGe}
For a central character $\chi$ let $\cH^\chi$ be the full subcategory of $\cH$ consisting
of bimodules $M$ such that $M\Ker(\chi)=0$ (in other words, the right action of $Z(\fg)$ on $M$ 
factorizes through $\chi$).
A very precise description of category $\cH^\chi$
was given by Bernstein and Gelfand in \cite{BG}.  
This description is based on the category $\cO$ of $\fg-$modules 
introduced by Bernstein, Gelfand and Gelfand
in \cite{BGG}. We refer the reader to \cite{Hu} for the basic definitions and results on the category $\cO$.

Recall that for any {\em weight} $\lambda$ one defines the {\em Verma module} $M(\lambda)\in \cO$.
The center $Z(\fg)$ acts on $M(\lambda)$ via central character $\chi_\lambda$. 
It follows from Harish-Chandra's theorem (see e.g. \cite[1.10]{Hu}) that any central
character arises in this way; moreover $\chi_\lambda =\chi_\mu$
if and only if there exists an element $w$ of the {\em Weyl group} $W$ such that 
$w(\lambda +\rho)-\rho=\mu$ where $\rho$ is the sum of the fundamental weights.
Thus for any central character $\chi$ there exists a {\em dominant} (see \cite[3.5]{Hu}) 
weight $\lambda$ such that $\chi=\chi_\lambda$.
From now on we will restrict ourselves to the case of {\em regular integral}
central characters $\chi$ (this means that $\chi=\chi_\lambda$ where $\lambda$ is a highest weight
of a finite dimensional representation of $\fg$). For example
$Z(\fg)$ acts on the trivial $\fg-$module via the regular integral character $\chi_0$.

\begin{theorem}[Theorem 5.9 in \cite{BG}] \label{BiG}
 Assume that the weight $\lambda$ is regular,
integral, and dominant.
The functor $M\mapsto M\ot_{U(\fg)}M(\lambda)$
is an equivalence of the category $\cH^\chi$ and the subcategory of $\cO$ consisting of modules
with integral weights.
\end{theorem}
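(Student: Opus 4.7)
The plan is to exhibit an explicit quasi-inverse to $F(M):=M\otimes_{U(\fg)}M(\lambda)$ and check that the unit and counit of the resulting adjunction are isomorphisms. First I would check that $F$ lands in the integral block of $\cO$: using $M(\lambda)=U(\fg)\otimes_{U(\b)}\BC_\lambda$ gives $F(M)=M\otimes_{U(\b)}\BC_\lambda$, where $M$ is taken with the right $U(\fg)$-action. The $\ad(\fg)$-algebraicity of $M$ produces a weight decomposition under $\h$ and local nilpotency for $\n$, so $F(M)$ lies in $\cO$; integrality of the weights comes from $\lambda$ being integral, and finite generation comes from the bimodule finite generation of $M$ together with the cyclicity of $M(\lambda)$.

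Next I would define $G:\cO_{\text{int}}\to \cH^\chi$ by $G(V):=L\bigl(\Hom_\BC(M(\lambda),V)\bigr)$, where $L$ denotes the maximal $\ad(\fg)$-locally finite subspace. The bimodule structure is $(x\cdot f\cdot y)(v)=xf(yv)$. The right action of $Z(\fg)$ factors through $\chi_\lambda$ since $Z(\fg)$ acts on $M(\lambda)$ by $\chi_\lambda$; $\ad$-finiteness holds by construction; finite generation as a bimodule must be extracted from the fact that $V\in\cO$ has finite-dimensional weight spaces and a highest-weight-style filtration. The standard tensor–Hom adjunction then gives a natural isomorphism $\Hom_\cO(F(M),V)\simeq \Hom_\cH(M,G(V))$, making $(F,G)$ an adjoint pair.

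The crux is to show that the unit $\eta_M:M\to GF(M)$ and the counit $\varepsilon_V:FG(V)\to V$ are isomorphisms. The hard case — and where regularity is essential — is the identification $G(M(\lambda))\simeq U(\fg)/\Ann M(\lambda)$, a theorem going back to Kostant and refined by Duflo: for regular dominant integral $\lambda$ the $\ad$-finite part of $\Hom_\BC(M(\lambda),M(\lambda))$ is exactly the image of $U(\fg)$, and the map $\bigl(U(\fg)/\Ann M(\lambda)\bigr)\otimes_{U(\fg)}M(\lambda)\to M(\lambda)$ is an isomorphism. This handles $\varepsilon_{M(\lambda)}$ and $\eta$ on cyclic bimodules of the form $U(\fg)/I$. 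To propagate to the entire subcategory, I would (a) verify that $F$ is exact (using that $M(\lambda)$ is free over $U(\n^-)$, so $-\otimes_{U(\b)}\BC_\lambda$ is exact on $\ad$-finite bimodules), and (b) generate every $V\in\cO_{\text{int}}$ by Vermas $M(\mu)$, each of which is $F$ applied to a concrete Harish-Chandra bimodule built from finite-dimensional $\fg$-modules via translation functors $T_\lambda^\mu$. Exactness plus the isomorphism on these generators, combined with a five-lemma argument for $\varepsilon$ and $\eta$, upgrades the result to an equivalence.

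The principal obstacle is the Kostant–Duflo identification of $G(M(\lambda))$; regularity of $\lambda$ is used precisely to rule out unexpected $\ad$-finite endomorphisms of the Verma module, and without this input the adjunction maps need not be bijective on the generating objects, so the whole equivalence rests on this computation.
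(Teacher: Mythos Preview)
The paper does not contain a proof of this theorem: it is stated with attribution to \cite[Theorem 5.9]{BG} and then used as a black box, so there is no argument in the paper to compare against. Your sketch is, in fact, a reasonable outline of the original Bernstein--Gelfand proof: the quasi-inverse $G(V)=L(\Hom_\BC(M(\lambda),V))$ is exactly their construction, the adjunction $(F,G)$ is set up as you describe, and the key computational input is indeed the identification of the $\ad$-finite endomorphisms of $M(\lambda)$ with $U_\chi$ (Kostant's theorem, together with Duflo's result that $\Ann M(\lambda)=U(\fg)\Ker(\chi_\lambda)$ for regular dominant $\lambda$). So your proposal is on the right track and matches the literature, but there is simply nothing in the present survey to compare it with.
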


As a consequence we see that any object of the category $\cH^\chi$ has finite length (this holds
with no restrictions on $\chi$, see e.g. \cite[Satz 6.30]{Ja}). Also the simple objects in the category
$\cH^\chi$ are labeled by the integral weights (see \cite[Proposition 5.4]{BG} for the general case).

Now we consider the left action of $Z(\fg)$. 
For two central characters $\chi_1$ and $\chi_2$ let ${}^{\chi_1}\cH^{\chi_2}$ be the category 
of Harish-Chandra bimodules $M$ such that $\Ker(\chi_1)M=M\Ker(\chi_2)=0$.
We also set $\cH(\chi):={}^{\chi}\cH^{\chi}$. It is clear that for $M\in {}^{\chi_1}\cH^{\chi_2}$ and
$N\in {}^{\chi_3}\cH^{\chi_4}$ we have $M\ot_{U(\fg)}N\in {}^{\chi_1}\cH^{\chi_4}$ and 
$M\ot_{U(\fg)}N=0$ unless $\chi_2=\chi_3$. In particular, the category 
$\cH(\chi)$ is a tensor category with unit object $U_\chi$.

For a regular integral $\chi_2$ Theorem \ref{BiG} implies that the category ${}^{\chi_1}\cH^{\chi_2}$
is nonzero if and only if $\chi_1$ is integral. Moreover one shows using Theorem \ref{BiG}
that the categories $\cH(\chi)$ are tensor equivalent for various regular integral $\chi$.
Furthermore, we have

\begin{corollary} Let $\chi=\chi_\lambda$ where $\lambda$ is regular,
integral, and dominant. The simple objects of $\cH(\chi)$ are naturally labeled
by the elements of $W$: for any $w\in W$ there exists $M_w\in \cH(\chi)$ such that
$M_w\ot_{U(\fg)}M(\lambda)$ is the irreducible $\fg-$module with highest weight
$w(\lambda +\rho)-\rho$.
\end{corollary}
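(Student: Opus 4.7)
The plan is to deduce the corollary directly from Theorem \ref{BiG} by tracking how the left $Z(\fg)$-action is transferred under tensoring with $M(\lambda)$. Let $\Phi: \cH^\chi \to \cO_{\mathrm{int}}$ denote the equivalence $M\mapsto M\ot_{U(\fg)}M(\lambda)$, where $\cO_{\mathrm{int}}$ is the full subcategory of $\cO$ on modules with integral weights.

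First I would observe that $\Phi$ is linear on the left in the following sense: for $z\in Z(\fg)$ and any $m\ot v\in\Phi(M)$ one has $z\cdot(m\ot v)=(zm)\ot v$, so the left action of $Z(\fg)$ on $M$ agrees with the $\fg$-action of $Z(\fg)$ on $\Phi(M)$. In particular $\Ker(\chi)$ annihilates $M$ on the left if and only if $\Ker(\chi)$ annihilates $\Phi(M)$. Consequently $\Phi$ restricts to an equivalence between $\cH(\chi)={}^\chi\cH^\chi$ and the full subcategory $\cO_{\mathrm{int}}^\chi\subset\cO_{\mathrm{int}}$ of modules on which $Z(\fg)$ acts through $\chi$.

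Next I would classify the simple objects of $\cO_{\mathrm{int}}^\chi$. The simples in $\cO_{\mathrm{int}}$ are the irreducible highest weight modules $L(\mu)$ for integral $\mu$, and $Z(\fg)$ acts on $L(\mu)$ via $\chi_\mu$. By Harish-Chandra's theorem (recalled in the paragraph above Theorem \ref{BiG}), $\chi_\mu=\chi_\lambda$ if and only if $\mu=w(\lambda+\rho)-\rho$ for some $w\in W$. Since $\lambda$ is regular, the stabilizer of $\lambda+\rho$ in $W$ is trivial, so these $|W|$ weights are pairwise distinct. Hence the simple objects of $\cO_{\mathrm{int}}^\chi$ are precisely the modules $L(w(\lambda+\rho)-\rho)$ indexed by $w\in W$.

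Finally, I would define $M_w\in\cH(\chi)$ to be the object corresponding to $L(w(\lambda+\rho)-\rho)$ under a quasi-inverse of $\Phi$. By construction $M_w\ot_{U(\fg)}M(\lambda)\simeq L(w(\lambda+\rho)-\rho)$, so the $M_w$ are pairwise non-isomorphic simples of $\cH(\chi)$ and exhaust the simples there. The only substantive step is the identification of $\cH(\chi)$ with $\cO_{\mathrm{int}}^\chi$, which is really just a matter of unwinding the definitions of $\Phi$ and of $\cH(\chi)$; everything else is either recalled directly from the Bernstein-Gelfand equivalence or is a standard fact about category $\cO$.
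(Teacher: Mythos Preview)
Your proposal is correct and follows exactly the implicit argument the paper intends: the corollary is stated without proof, but the surrounding text sets up precisely the ingredients you use---Theorem~\ref{BiG}, Harish-Chandra's theorem on central characters, and the observation that the left $Z(\fg)$-action on $M$ becomes the $Z(\fg)$-action on $M\ot_{U(\fg)}M(\lambda)$, so that the equivalence restricts to $\cH(\chi)\simeq \cO_{\mathrm{int}}^\chi$. There is nothing to add or correct.
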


\subsection{Associated varieties} \label{assv}
In this section we identify $\fg^*$ and $\fg$ via the Killing form. Let $G$ be the complex connected 
adjoint algebraic group with the Lie algebra $\fg$. An element $x\in \fg$ is {\em nilpotent}
if $ad(x): \fg \to \fg$ is nilpotent. Let $\cN \subset \fg$ be the {\em nilpotent cone}, that is the 
set of all nilpotent elements. Clearly $\cN$ is a closed $G-$invariant subvariety of $\fg$.
It is a classical fact (see \cite{K}) that $\cN$ is a finite union of $G-$orbits, $\cN=\sqcup \BO$.
The $G-$orbits appearing in $\cN$ are called {\em nilpotent orbits}. For a nilpotent orbit
$\BO$ let $\bar \BO \subset \cN$ be its Zariski closure; clearly $\bar \BO$ is a union of nilpotent orbits.

The associated varieties (see e.g. \cite{Vo}) provide 
a convenient measure of
``size'' of a Harish-Chandra bimodule. Let $M$ be a Harish-Chandra bimodule.
Then there exists a finite dimensional $ad(\fg)-$invariant subspace $M_0\subset M$ generating $M$
as a left $U(g)-$module. Then the PBW filtration on $U(\fg)$ induces a compatible filtration on $M$
(note that this filtration is $ad(\fg)-$invariant, so it is compatible with both left and right $U(\fg)-$actions).
The associated graded $gr M$ with respect to this filtration is a left module over $gr U(\fg)=S(\fg)$.
Let $V(M)$ be the support of this module in $\fg\simeq \fg^*=Spec(S(\fg))$. The following properties
of $V(M)$ are easy to verify (see e.g. \cite{Vo}):

(1) $V(M)$ is independent of the choice of $M_0$;

(2) $V(M)$ is invariant under the adjoint action of $G$;

(3) for an exact sequence $0\to M_1\to N\to M_2\to 0$ we have $V(N)=V(M_1)\cup V(M_2)$;

(4) $V(M_1\ot_{U(\fg)}M_2)\subset V(M_1)\cap V(M_2)$;

(5) for $M\in \cH^\chi$ we have $V(M)\subset \cN$.

The following result of Joseph is fundamental:

\begin{theorem}[\cite{Jo}, see also \cite{BoB,Vo}] \label{Joirr}
Assume that $M\in \cH$ is irreducible. Then $V(M)=\bar \BO$ for some 
nilpotent orbit $\BO$.
\end{theorem}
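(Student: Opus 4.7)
The plan is to reduce the theorem to showing that $V(M)$ is irreducible as an algebraic variety. By properties (2) and (5) listed before the statement, $V(M)$ is a closed $G$-invariant subvariety of the nilpotent cone $\cN$. Since $\cN$ has only finitely many $G$-orbits, $V(M)$ is automatically a union $\bar\BO_{i_1}\cup\cdots\cup\bar\BO_{i_r}$ of orbit closures. If this union is irreducible, one of the $\bar\BO_{i_j}$ must equal all of $V(M)$, giving the desired orbit $\BO$.

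For the irreducibility step, I would view $M$ as a simple module over the enveloping algebra $U(\fg)^e := U(\fg)\otimes_{\BC} U(\fg)^{\op}$, so that $J := \Ann_{U(\fg)^e}(M)$ is a primitive, hence prime, two-sided ideal. Choosing a good $ad(\fg)$-invariant filtration on $M$, the associated graded $gr\,M$ is a finitely generated module over $S(\fg)\otimes S(\fg)\cong \BC[\fg^*\oplus\fg^*]$ whose support $\tilde V(M)$ is contained in the zero locus of $gr\,J$. By Gabber's integrability theorem, this zero locus is coisotropic for the natural Poisson structure on $\fg^*\oplus\fg^*$. The $ad(\fg)$-algebraicity of $M$ forces $\tilde V(M)$ to lie in the zero fiber of the antidiagonal moment map; after identifying $\fg\cong\fg^*$ via the Killing form, this fiber is the diagonal, so both coordinate projections restrict to finite surjections $\tilde V(M)\to V(M)$.

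The main obstacle is deducing irreducibility of $V(M)$ itself from the primality of $J$. The cleanest route I know is to invoke Joseph's theorem \cite{Jo} that the associated variety of any primitive ideal in $U(\fg)$ is the closure of a single nilpotent orbit, applied to the left annihilator $\Ann_l(M)\subset U(\fg)$; this annihilator is primitive because the composition factors of $M$ as a left $U(\fg)$-module all share the same annihilator (which follows from simplicity of $M$ as a bimodule combined with Duflo's theorem). This yields $V(U(\fg)/\Ann_l(M))=\bar\BO$ for a unique orbit $\BO$. From the general inclusion $gr\,\Ann_l(M)\subset\Ann_{S(\fg)}(gr\,M)$ we obtain $V(M)\subset\bar\BO$, and the reverse inclusion follows from the faithfulness and finite generation of $M$ over $U(\fg)/\Ann_l(M)$ (the latter guaranteed by the finite-dimensional $ad(\fg)$-invariant generating subspace $M_0\subset M$ singled out in the proof of the lemma preceding Section~\ref{BeGe}), combined with the irreducibility of $\bar\BO$.
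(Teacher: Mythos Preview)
The paper does not supply a proof of this theorem; it is quoted from \cite{Jo} (with the related references \cite{BoB,Vo}). So there is no ``paper's proof'' to compare against, and I evaluate your proposal on its own.

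Your write-up contains two approaches, and the first one (via $U(\fg)^e$, Gabber's theorem, and the antidiagonal moment map) is never completed: you set it up and then abandon it at the sentence ``The main obstacle is\ldots''. As written it does not yield irreducibility of $V(M)$, so it should either be finished or removed.

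The second approach is the standard reduction to the primitive-ideal version of Joseph's theorem, and the overall logic is sound: one shows $V(M)=V(U(\fg)/\Ann_l(M))$ and then invokes the irreducibility of the latter. Two comments on the details. First, your argument that $\Ann_l(M)$ is primitive is not correct as stated: an irreducible Harish-Chandra bimodule typically has \emph{infinite} length as a left $U(\fg)$-module (think of $U_\chi$ itself), so speaking of ``the composition factors of $M$ as a left $U(\fg)$-module'' does not make sense, and Duflo's theorem is not applied that way. The primitivity of the one-sided annihilators of a simple Harish-Chandra bimodule is a separate (standard) fact; see e.g.\ \cite[6.4, 7.3]{Ja}. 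Second, the reverse inclusion $V(M)\supset \bar\BO$ is correct but your justification could be sharpened: from faithfulness and finite generation over $R=U(\fg)/\Ann_l(M)$ you get an embedding $R\hookrightarrow M^n$ of left $R$-modules, hence equality of Gelfand--Kirillov dimensions, hence $\dim V(M)=\dim\bar\BO$; combined with $V(M)\subset\bar\BO$ and irreducibility of $\bar\BO$ this forces $V(M)=\bar\BO$.

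Finally, note that your appeal to property~(5) presupposes $M\in\cH^\chi$; for irreducible $M$ this is automatic (the center acts by scalars on each side), but it should be said. More importantly, since the primitive-ideal statement you invoke is exactly the content of \cite{Jo}, your proposal is really a reduction of the bimodule formulation to the primitive-ideal formulation rather than an independent proof. That reduction is precisely what the paper alludes to later (``this result is closely related with Theorem~\ref{Joirr}, see \cite[Corollary~4.7]{Vo}'' and the identity $V(M)=V(U(\fg)/I)$ cited from \cite[7.7, 17.8]{Ja}).
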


Assume that $M\in \cH(\chi)$ where $\chi$ is regular integral. It follows from the results of 
\cite{BV1,BV2,Jo}
that $V(M)=\bar \BO$ where $\BO$ is {\em special} nilpotent orbit in the sense of Lusztig, (all the
nilpotent orbits are special in type $A$; however this is not the case in other types); and
all special nilpotent orbits can be obtained in this way.

The theory of associated varieties is closely related with the theory of Kazhdan-Lusztig cells,
see \cite{KL}. Namely let us introduce the following equivalence relation on the Weyl group $W$:
$u\sim v$ if $V(M_u)=V(M_v)$. Then $W$ is partitioned into equivalence classes labeled by
the special nilpotent orbits. It follows from the results of \cite{BV1,BV2,Jo} that this partition coincides 
with the partition of $W$ into {\em two sided cells} as defined in \cite{KL}. In particular 
the set of two sided cells is in natural bijection with the set of special nilpotent orbits.

\subsection{Cell categories} \label{cellc}
Let $\chi$ be a regular integral central character and let $\BO$ be
a nilpotent orbit. We define full subcategories $\cH(\chi)_{\le \bar \BO}$ and $\cH(\chi)_{< \bar \BO}$
as follows: $M\in \cH(\chi)_{\le \bar \BO}$ (respectively, $M\in \cH(\chi)_{< \bar \BO}$) if and only if 
$V(M)\subset \bar \BO$ (respectively, $V(M)\subset \bar \BO -\BO$). It follows easily from
the properties of associated varieties that $\cH(\chi)_{\le \bar \BO}$ and $\cH(\chi)_{< \bar \BO}$ 
are Serre subcategories of $\cH(\chi)$;
also $\cH(\chi)_{\le \bar \BO}$ is closed under the tensor product
and the tensor product of bimodules from $\cH(\chi)_{< \bar \BO}$ and $\cH(\chi)_{\le \bar \BO}$ 
is contained in $\cH(\chi)_{< \bar \BO}$.

We define $\tilde \cH(\chi)_\BO$ to be the Serre quotient category 
$\cH(\chi)_{\le \bar \BO}/\cH(\chi)_{< \bar \BO}$ (note that the category $\tilde \cH(\chi)_\BO$ 
is nonzero if and only if the nilpotent orbit $\BO$ is special).
Let $\cH(\chi)_\BO\subset \tilde \cH(\chi)_\BO$ be the full
subcategory consisting of the semisimple objects in $\tilde \cH(\chi)_\BO$. 
The tensor product $\ot_{U(\fg)}$ descends
to a well defined tensor product functor $\ot$ on the category $\tilde \cH(\chi)_\BO$ endowed 
with the associativity constraint.

\begin{theorem}[see \cite{BFO2, Lo2, LO}] \label{hc multi}
The restriction of $\ot$ to $\cH(\chi)_\BO\subset \tilde \cH(\chi)_\BO$ takes values in
 the subcategory $\cH(\chi)_\BO$. 
 Moreover, the category $\cH(\chi)_\BO$ is an indecomposable multi-fusion category.
\end{theorem}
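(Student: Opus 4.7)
The plan is to combine the Bernstein--Gelfand finiteness inputs with the deeper semisimplicity statement of \cite{BFO2, Lo2, LO}, proceeding in four stages.

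First, I would establish the basic structural properties. By Theorem \ref{BiG} objects of $\cH(\chi)$ have finite length and finite-dimensional Hom spaces; these properties pass to the Serre subcategory $\cH(\chi)_{\le\bar\BO}$ and its Serre quotient $\tilde\cH(\chi)_\BO$. The simple objects of $\tilde\cH(\chi)_\BO$ are the images of those $M_w$ with $V(M_w)=\bar\BO$, i.e.\ with $w$ lying in the two-sided cell $\bc$ associated to the special orbit $\BO$; in particular there are only finitely many isomorphism classes of them, so $\cH(\chi)_\BO$ is finite semisimple in the sense of Section \ref{mfc}.

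Next I would descend the tensor structure. Properties (3)--(5) of associated varieties guarantee that $\cH(\chi)_{\le\bar\BO}$ is closed under $\ot_{U(\fg)}$ and that $\cH(\chi)_{<\bar\BO}$ is a tensor ideal in it; hence $\ot_{U(\fg)}$ descends to a bifunctor $\ot$ on $\tilde\cH(\chi)_\BO$ with the inherited associativity constraint. The main step is to show that for simple $M_u,M_v$ with $u,v\in\bc$ the product $M_u\ot M_v$ is already semisimple in $\tilde\cH(\chi)_\BO$, so that $\cH(\chi)_\BO$ is closed under $\ot$. This is precisely the content of \cite{BFO2, Lo2, LO}: one may either use Losev's exact tensor functor from $\cH(\chi)_\BO$ to the category of finite-dimensional bimodules over Premet's finite $W$-algebra attached to $\BO$ (whose target is semisimple for elementary reasons, and which is faithful enough to detect semisimplicity), or identify $\tilde\cH(\chi)_\BO$ with the truncated convolution category of perverse sheaves on the flag variety, where semisimplicity of $M_u\ot M_v$ drops out of the decomposition theorem.

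For rigidity I would use the contragredient duality on Harish-Chandra bimodules: for $M\in \cH(\chi)$ let $M^\vee$ denote the $ad(\fg)$-finite part of $\Hom_\BC(M,\BC)$ with the two $U(\fg)$-actions swapped. This is again Harish-Chandra, preserves associated varieties, and the usual evaluation and coevaluation morphisms descend to the Serre quotient to yield both left and right duals for every object of $\cH(\chi)_\BO$. Indecomposability follows from the cellular interpretation: the summands $\be_i$ of the unit are indexed by the left cells contained in $\bc$, and since by definition of a two-sided cell any pair of such left cells is connected by some $M_w$ with $w\in\bc$, for every pair $i,j$ there is a simple $L$ with $\be_i\ot L\ot\be_j\simeq L$. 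The main obstacle is the second stage --- the semisimplicity of $M_u\ot M_v$ after passing to the quotient. This does not follow formally from the properties of associated varieties or from Theorem \ref{BiG}; it requires one of the deep inputs above, and is exactly why the cell categories are multi-fusion rather than merely abelian.
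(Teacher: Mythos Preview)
Your overall plan matches the paper's strategy: you correctly isolate semisimplicity of $M_u\ot M_v$ in the quotient as the heart of the matter and name both of the routes the paper cites --- Losev's functor to finite $W$-algebra bimodules and the truncated-convolution realization via perverse sheaves. The paper emphasizes the first route in Section~\ref{W}, establishing a fully faithful tensor functor $\cH(\chi)_\BO\to\Coh_A(Y\times Y)$ (Theorem~\ref{hcw}) from which closure under $\ot$ follows; the unit object is exhibited directly right after Theorem~\ref{hc multi} as $\bigoplus_{I\in Pr_\chi(\bar\BO)}U(\fg)/I$.

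There is, however, a real gap in your rigidity step. For a Harish-Chandra bimodule $M$ the ad-finite linear dual $M^\vee$ (with the two actions swapped) is again Harish-Chandra, but there are no ``usual evaluation and coevaluation morphisms'' to descend: the natural pairing $M^\vee\times M\to\BC$ lands in $\BC$, not in the unit object $U_\chi$, and there is no candidate at all for a coevaluation $U_\chi\to M\ot_{U(\fg)}M^\vee$ (what would $1$ go to?). So the sentence ``the usual evaluation and coevaluation morphisms descend to the Serre quotient'' is empty as written. Rigidity here is not proved by constructing these maps by hand. In the $W$-algebra approach it is inherited once $\cH(\chi)_\BO$ is identified, via Theorem~\ref{hcw} and Corollary~\ref{barA}, with the manifestly rigid category $\Coh_{\bar A,\omega}(Y\times Y)$; in the perverse-sheaf approach it is Verdier duality that supplies duals for the truncated-convolution category $\cJ_C$, as in \cite{BFO1,Lutrun}. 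The contragredient $M^\vee$ does end up being the dual object, but one learns this after the fact by matching it with the known duality on the target, not by exhibiting (co)evaluations directly in $\cH(\chi)$.

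Your indecomposability argument via left cells is fine in spirit, though note it leans on the identification of the summands $\be_i$ with left cells (end of Section~\ref{cellc}), which already uses \cite{BV1,BV2,Lub}. The paper instead gets indecomposability for free from Corollary~\ref{barA}: a dual category $(\Vec_{\bar A}^\omega)^*_\cM$ is automatically indecomposable by Theorem~\ref{doublec}.
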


We will explain some ideas of the proof of Theorem \ref{hc multi} in Section \ref{W}. 
In the same time we will give a precise description of the cell category $\cH(\chi)_\BO$ as a 
multi-fusion category. For now we will explain that the category $\cH(\chi)_\BO$ does contain the
unit object. We recall (see e.g. \cite[1.9]{Ja}) that a two sided ideal $I\subset U(\fg)$ is {\em primitive} 
if it is the annihilator of an irreducible $\fg-$module. It follows from Schur's lema  that for a primitive 
ideal $I$ the intersection $I\cap Z(\fg)=\Ker(\chi)$ for some central character $\chi$; let
$Pr_\chi$ be the set of all such primitive ideals (this set is finite; it is explicitly known in all cases
thanks to the deep work of Joseph \cite{Jo1}). 

Let $I\in Pr_\chi$. It was proved by Joseph \cite{Jo} that $V(U(\fg)/I)=\bar \BO$ for some special nilpotent
orbit $\BO$ (this result is closely related with Theorem \ref{Joirr}, see \cite[Corollary 4.7]{Vo}).
 It is also known that for any ideal
$I'\supset I, I'\ne I$ the dimension of $V(U(\fg))/I')$ is strictly smaller than the dimension of $V(U(\fg))/I)$,
see \cite[3.6]{BK}. Therefore $U(\fg)/I$ contains a unique simple sub-bimodule $M_I$; moreover $V(U(\fg)/I)=V(M_I)=\bar \BO$ and $V((U(\fg)/I)/M_I)\subset \bar \BO -\BO$. In other words
$U(\fg)/I\simeq M_I$ in the category $\tilde \cH(\chi)_\BO$; in particular $U(\fg)/I\in \cH(\chi)_\BO$.
Also for two distinct $I,J\in Pr_\chi$ with $V(U(\fg)/I)=V(U(\fg)/J)=\bar \BO$ we have
$U(\fg)/I\ot_{U(\fg)}U(\fg)/J=U(\fg)/(I+J)$ whence $V(U(\fg)/I\ot_{U(\fg)}U(\fg)/J)\subset \bar \BO -\BO$.
Equivalently $U(\fg)/I\ot_{U(\fg)}U(\fg)/J=M_I\ot M_J=0$ in the category $\cH(\chi)_\BO$.

It is well known that for a simple Harish-Chandra
bimodule $M\in {}^{\chi_1}\cH^{\chi_2}$ there exist $I\in Pr_{\chi_1}$ and $J\in Pr_{\chi_2}$ such
that $I$ is the annihilator of $M$ considered as a left $U(\fg)-$module and $J$ is the annihilator
of $M$ considered as a right $U(\fg)-$module; moreover $V(M)=V(U(\fg)/I)=V(U(\fg)/J)$,
see e.g. \cite[7.7, 17.8]{Ja}. Clearly $U(\fg)/I\ot_{U(\fg)}M=M\ot_{U(\fg)}U(\fg)/J=M$. 
Let $Pr_\chi(\bar \BO)\subset Pr_\chi$ consists of $I$ with $V(U(\fg)/I)=\bar \BO$.
It follows from the above
that $$\be=\bigoplus_{I\in \Pr_\chi(\bar \BO)}U(\fg)/I=\bigoplus_{I\in \Pr_\chi(\bar \BO)}M_I$$ is the unit object of $\cH(\chi)_\BO$. Again there is an important connection with the theory of Kazhdan-Lusztig cells
\cite{KL}: it follows from the results of \cite{BV1, BV2, Lub} that two elements $u,v\in W$ are
in the same {\em left cell} if and only if there exists $\be_i$ such that $M_u\ot \be_i\simeq M_u$ and
$M_v\ot \be_i\simeq M_v$. In particular the set $\Pr_\chi$ is in bijection with the set of left cells in $W$.

\section{Actions of cell categories} \label{W}
\subsection{Whittaker modules} \label{Walg}
Let $e\in \fg$ be a nilpotent element. By the Jacobson-Morozov theorem
we can pick $h,f\in \fg$ such that $e,f,h$ is an $sl_2-$triple, that is $[h,e]=2e, [h,f]=-2f, [e,f]=h$. Then
$\fg$ decomposes into eigenspaces for $ad(h)$:
$$\fg =\bigoplus_{i\in \BZ}\fg(i),\;\; \fg(i)=\{ x\in \fg | [h,x]=ix\}.$$
In particular $e\in \fg(2)$ and $f\in \fg(-2)$. Using the Killing form $({},{})$ on $\fg$ one defines a skew-symmetric bilinear form $x,y\mapsto (e,[x,y])$ on the space $\fg(-1)$; it turns out that this form is
non-degenerate. Pick a lagrangian subspace $l\subset \fg(-1)$ and set 
$\m=\m_l=l\oplus \bigoplus_{i\le -2}\fg(i)$. Then $\xi(x)=(x,e)$ is a Lie algebra homomorphism 
$\m \to \BC$. Let $\m_\xi$ be the Lie subalgebra of $U(\fg)$ spanned by $x-\xi(x), x\in \m$.

\begin{definition}[\cite{Mo}]
We say that $\fg-$module is {\em Whittaker} if the action of $\m_\xi$  on it is
locally nilpotent.
\end{definition}

Let $\Wh$ be the category of Whittaker modules (this is a full Serre subcategory of category of
$\fg-$modules). We have a functor from $\Wh$ to vector spaces
$$M \mapsto \{ m\in M | xm=\xi(x)m,\; \forall x\in \m \}.$$
Let $U(\fg,e)$ be the algebra of endomorphisms of this functor; thus the functor above upgrades
to a functor $\text{Sk}: \Wh \to U(\fg,e)-mod$. An important result proved by Skryabin \cite{Sk}
(see also \cite{GanG} and \cite{Lo1}) is that this functor is an equivalence of categories.
Thus we call $\text{Sk}$ the Skryabin equivalence.

\begin{remark} The algebras $U(\fg,e)$ are {\em finite $W-$algebras} introduced by Premet
\cite{Pr}. We refer the reader to \cite{Loicm} for a nice survey of their properties. 
\end{remark}

A particularly important property of algebras $U(\fg,e)$ is that they do not depend on the choice
of lagrangian subspace $l$ (more precisely the algebras defined using different choices of $l$
are canonically isomorphic), see \cite{GanG}. In particular, the centralizer $Q$ of $e,h,f$ in $G$
acts naturally on $U(\fg,e)$, see \cite[2.6]{Loicm}.

\subsection{Irreducible finite dimensional representations of finite $W-$algebras} \label{Dodd etc}
Let $M\in \cH$ and $N\in \Wh$. For $x\in \fg$ and $m\ot n \in M\ot_{U(\fg)}N$ we have
$x(m\ot n)=ad(x)m\ot n+m\ot xn$. The subalgebra $\m$ consists of nilpotent elements, so 
$ad(x)$ is locally nilpotent for any $x\in \m$. Hence $M\ot_{U(\fg)}N\in \Wh$, in other words
the tensor category $\cH$ acts on the category $\Wh$. Let ${}^\chi \Wh$ be the full subcategory
of $M\in \Wh$ such that $Z(\fg)$ acts on $M$ through the central character $\chi$. Clearly
the action above restrict to an action of $\cH(\chi)$ on ${}^\chi \Wh$. 

We will be interested in the set $Y=Y(\chi)$ of isomorphism classes of irreducible modules $M$ in 
${}^\chi \Wh$ such that $\Sk(M)$ is finite dimensional. Since $\Sk$ is an equivalence, $Y$ is
also the set of irreducible finite dimensional representations of $W-$algebra $U(\fg,e)$. 
For any $M\in Y$ its annihilator is a primitive ideal of $U(\fg)$; thus we get a map
$\Ann_\chi : Y\to Pr_\chi$. It was proved by Premet \cite{Pr2} that any ideal $I$ in the image of this map
is contained in $Pr_\chi(\bar \BO)$ where $\BO=Ge$ is the nilpotent orbit containing $e$. Moreover, it was conjectured by Premet and proved by Losev \cite{Lo1} (see also \cite{Pr2,Pr3,Gi}) that 
any $I\in Pr_\chi(\bar \BO)$ is in the image of this map. Recall that the group
$Q$ acts on the algebra $U(\fg,e)$. Thus we get an action of $Q$ on the set $Y$. One shows
that the unit component $Q^0\subset Q$ acts trivially, so we get an action of the component
group $C(e)=Q/Q^0$ on $Y$ (it is well known that the group $C(e)$ is isomorphic to the component
group of the centralizer of $e$ in $G$ or, equivalently, $C(e)$ is equivariant fundamental group
of the orbit $\BO$). It was proved by Losev \cite{Lo2} that each fiber of the map $\Ann_\chi$
is exactly one $C(e)-$orbit in $Y$. We will seek for a precise description of these orbits.
Actually there is a little bit more information here. Let $M\in Y$ and let $Q_M\subset Q$ be its
stabilizer in the group $Q$. Then $Q_M$ acts projectively on $\Sk(M)$, so we have a cohomology
class in $H^2(Q_M,\BC^\times)$ describing this action. The data of the set $Y$ together with
$Q-$action and 2-cocycles above (which should be compatible in an obvious way) can be described
as the data of ``$Q-$set of centrally extended points'', see Example \ref{modex} (ii). 
Recall that in the special case when the group $Q$ is finite, precisely the same
data describe a structure of module category over $\Vec_Q$ on the category $\Coh(Y)$, see 
Example \ref{modex} (ii). In general, $\Coh(Y)$ acquires the structure of module category
over $\Vec_A$ for any finite subgroup $A\subset Q$.

Let ${}^\chi \Wh^f\subset {}^\chi \Wh$ be the full subcategory consisting of semisimple $N$
such that $\Sk(N)$ is finite dimensional. It was proved by Losev \cite{Lo2} that for 
$M\in \cH(\chi)_{\le \BO}$ and $N\in {}^\chi \Wh^f$ we have $M\ot_{U(\fg)}N\in
{}^\chi \Wh^f$. Moreover, $M\ot_{U(\fg)}N=0$ for $M\in \cH(\chi)_{< \BO}$. Thus the category
$\cH(\chi)_\BO$ acts on ${}^\chi \Wh^f$. On the other hand the group $Q$ acts on the category
${}^\chi \Wh^f$ (or rather on the equivalent category of $U(\fg,e)-$modules) via twisting: an element 
$g\in Q$ sends a $U(\fg,e)-$module to itself with the action of $U(\fg,e)$ twisted by an automorphism $g$.
One shows that these two actions commute. We pick a finite subgroup $A\subset Q$ which surjects
on $C(e)=Q/Q^0$ and restrict the above action of $Q$ to $A$. Then the category ${}^\chi \Wh^f$
is a module category over fusion category $\Vec_A$ (note that  ${}^\chi \Wh^f\simeq \Coh(Y)$,
and this is the same structure of the module category as in the previous paragraph). 
Since any $M\in \cH(\chi)_\BO$ produces a functor $M\ot_{U(\fg)}?: {}^\chi \Wh^f\to {}^\chi \Wh^f$
commuting with the action of $\Vec_A$ we get a canonical tensor functor
$$\cH(\chi)_\BO\to \Fun_{\Vec_A}({}^\chi \Wh^f,{}^\chi \Wh^f)=(\Vec_A)^*_{{}^\chi \Wh^f}=
\Coh_A(Y\times Y),$$ 
see Example \ref{dualex}.

\begin{theorem} \label{hcw} {\em (\cite{Lo2, LO})}
The functor $\cH(\chi)_\BO\to \Coh_A(Y\times Y)$ is fully faithful. 
\end{theorem}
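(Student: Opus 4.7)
The plan is to reduce full faithfulness of $F$ to a combination of non-vanishing and injectivity on simple objects, and then verify both using the relation between simples in $\cH(\chi)_\BO$ and the combinatorics of $Y$ under the $A$-action. In any multi-fusion category over an algebraically closed field of characteristic zero, simple objects have one-dimensional endomorphism algebras, so a tensor functor $F$ between such categories is fully faithful iff it sends each simple to a simple and distinct simples to non-isomorphic ones. Equivalently, by rigidity one has $\Hom(X,Y)\simeq \Hom(\be,Y\ot X^*)$ on both sides, and preservation of duals by $F$ reduces the check to the functor $\Hom(\be,-)$ on simples.

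First I would verify that $F$ respects the decomposition of the unit. We have $\be_{\cH(\chi)_\BO}=\bigoplus_{I\in \Pr_\chi(\bar\BO)}M_I$ from Section \ref{cellc}, while the unit of $\Coh_A(Y\times Y)$ is the structure sheaf of the diagonal, decomposing according to $A$-orbits on $Y$. The Premet--Losev results recalled in Section \ref{Dodd etc} give a surjective map $\Ann_\chi:Y\to \Pr_\chi(\bar\BO)$ whose fibers are the $C(e)$-orbits; since $A$ surjects onto $C(e)$ with $A^0$ acting trivially on $Y$, these coincide with the $A$-orbits on $Y$, canonically identifying the two indexing sets. The Skryabin equivalence then shows that $F(M_I)$ acts as the identity on the summand of ${}^\chi\Wh^f$ corresponding to $Y_I:=\Ann_\chi^{-1}(I)$ and by zero on the complement, i.e.\ $F(M_I)$ is the unit bundle on the diagonal of $Y_I$.

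For a general simple $M\in \cH(\chi)_\BO$ with left annihilator $I$ and right annihilator $J$, the image $F(M)$ is automatically supported on $Y_I\times Y_J$. Non-vanishing of $F(M)$ follows from Losev's theorem that every ideal in $\Pr_\chi(\bar\BO)$ is realized as $\Ann(N)$ for some $N\in Y$, together with the fact that $M\ot_{U(\fg)}N\ne 0$ whenever $\Ann(N)=J$ (since then $U(\fg)/J$ acts nontrivially on $N$ and $M$ is a sub-bimodule of $U(\fg)/I\ot_{U(\fg)} U(\fg)/J$-type quotients). To conclude full faithfulness I would then match simples block by block indexed by the pair $(I,J)$: simples of $\cH(\chi)_\BO$ with annihilators $(I,J)$ correspond via Kazhdan--Lusztig combinatorics to intersections of left and right cells, and Losev's fine classification of simple finite-dimensional $U(\fg,e)$-modules puts these in bijection with the simples of the block $\Coh_{A}(Y_I\times Y_J)\subseteq \Coh_A(Y\times Y)$, the cohomological twisting on stabilizers encoding the $2$-cocycles discussed after Example \ref{modex}.

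The principal obstacle is this block-wise matching. Non-vanishing of $F(M)$ and the identification of its support are comparatively transparent, but showing that distinct simples in $\cH(\chi)_\BO$ with the same annihilator pair $(I,J)$ produce non-isomorphic $A$-equivariant bundles on $Y_I\times Y_J$ requires the full strength of Losev's comparison theorem between primitive ideals, cells, and component-group orbits on simple $W$-algebra modules, combined with Lusztig's cell-theoretic description of the Grothendieck ring of $\cH(\chi)_\BO$. This rank-and-parametrization match is the technical core of the argument; once it is in place, full faithfulness follows formally from the reduction in the first paragraph.
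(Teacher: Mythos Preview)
Your proposal has a fundamental circularity. In the third paragraph you invoke ``Losev's fine classification of simple finite-dimensional $U(\fg,e)$-modules'' to match the simples of $\cH(\chi)_\BO$ with those of $\Coh_A(Y_I\times Y_J)$ block by block. But in the paper's logical order that classification is a \emph{consequence} of Theorem~\ref{hcw}: only after full faithfulness is established does Corollary~\ref{barA} apply, and only then are the quotient $\bar A$, the subgroups $B_i$, the cocycles, and the precise structure of $Y$ as a centrally extended $\bar A$-set determined (see the paragraphs following Corollary~\ref{barA}). You cannot feed the output of this machine back in as input. The same issue affects your first paragraph: you invoke rigidity of $\cH(\chi)_\BO$ to rewrite $\Hom(X,Y)$ as $\Hom(\be,Y\ot X^*)$, but the paper explicitly states that even closedness of $\cH(\chi)_\BO$ under the tensor product---hence Theorem~\ref{hc multi} and in particular rigidity---is \emph{deduced from} Theorem~\ref{hcw}, not assumed before it.

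There is also a gap in your non-vanishing step. You assert $M\ot_{U(\fg)}N\ne 0$ whenever the right annihilator of $M$ equals $\Ann(N)=J$, on the grounds that ``$M$ is a sub-bimodule of $U(\fg)/I\ot_{U(\fg)} U(\fg)/J$-type quotients''. Neither clause is justified: a simple Harish-Chandra bimodule with annihilators $(I,J)$ has no reason to embed in such a tensor product, and in any case a nonzero right $U(\fg)/J$-module tensored over $U(\fg)/J$ with a nonzero left module can perfectly well vanish. That $F$ kills no simple object is itself part of the content of the theorem.

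The actual argument, as the Remark following the theorem indicates, proceeds through a different mechanism: the theory of \emph{Harish-Chandra bimodules for $W$-algebras} developed by Ginzburg \cite{Gi} and Losev \cite{Lo2}. One constructs a restriction (``dagger'') functor from Harish-Chandra $U(\fg)$-bimodules to $Q$-equivariant Harish-Chandra $U(\fg,e)$-bimodules, shows it is an exact tensor functor compatible with associated varieties, and proves directly that on the cell subquotient it becomes fully faithful. The functor to $\Coh_A(Y\times Y)$ is then identified with this restriction followed by the action on finite-dimensional $U(\fg,e)$-modules. The point is that this argument is structural rather than combinatorial: it does not presuppose knowing the parametrization of simples on either side, which is exactly why it can serve as the \emph{input} to the subsequent classification (Corollary~\ref{barA} and beyond) rather than depending on it.
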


\begin{remark} An important tool in the proof of Theorem \ref{hcw} is the notion of Harish-Chandra
bimodules for $W-$algebras introduced by Ginzburg \cite{Gi} and Losev \cite{Lo2}. 
It is possible to replace the group
$Q$ by the finite group $A$ since the action of $Q$ on $U(\fg,e)$ has the following property:
there is embedding of the Lie algebra $\fq$ of $Q$ to $U(\fg,e)$ (considered as a Lie algebra) such that
the differential of $Q-$action on $U(\fg,e)$ coincides with the adjoint action of $\fq$,
see \cite[1.1(1)]{Lo2}.
\end{remark} 

One consequence of Theorem \ref{hcw} is the fact that
the category $\cH(\chi)_\BO$ closed under the tensor product, see \cite[Corollary 1.3.2]{Lo2}.
This is a crucial step in the proof of Theorem \ref{hc multi}.
Moreover one shows that the module category ${}^\chi \Wh^f$ over $\cH(\chi)_\BO$ is indecomposable,
see \cite[Theorem 5.1]{LO}.
Thus we can apply Corollary \ref{cco} and get the following
\begin{corollary} \label{barA}
There is a quotient $\bar A$ of $A$ and $\omega \in H^3(\bar A,\BC^\times)$ such that the action
of $Vec_A$ on ${}^\chi \Wh^f$ factors through tensor functor $\Vec_A\to \Vec_{\bar A}^\omega$
and the action on ${}^\chi \Wh^f$ induces tensor equivalence $\cH(\chi)_\BO \simeq \Coh_{A,\omega}(Y\times Y)$.
\end{corollary}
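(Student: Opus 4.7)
The plan is to apply Corollary \ref{cco} directly to the setup at hand. I would take the module category $\cM := {}^\chi\Wh^f$ over $\Vec_A$, and take $\cC := \cH(\chi)_\BO$, viewed as a full multi-fusion subcategory of $(\Vec_A)^*_\cM = \Coh_A(Y\times Y)$ via the fully faithful tensor embedding supplied by Theorem \ref{hcw}. This identification is legitimate because the $\cH(\chi)_\BO$-action and the $\Vec_A$-action on ${}^\chi\Wh^f$ commute (so $\cH(\chi)_\BO$ really maps into the dual category $(\Vec_A)^*_\cM$), and Theorem \ref{hcw} asserts that this tensor functor is fully faithful.

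Before invoking Corollary \ref{cco} I would verify its one nontrivial hypothesis, namely that $\cM = {}^\chi\Wh^f$ is indecomposable as a module category over $\cC = \cH(\chi)_\BO$. This is precisely the content of \cite[Theorem 5.1]{LO}, already cited in the paragraph preceding the corollary. Once this is in hand, Corollary \ref{cco} immediately produces a quotient $\bar A$ of $A$, a class $\omega \in H^3(\bar A,\BC^\times)$, and a surjective tensor functor $F\colon \Vec_A \to \Vec_{\bar A}^\omega$ such that the $\Vec_A$-action on ${}^\chi\Wh^f$ factors through $F$, together with an identification $\cH(\chi)_\BO = (\Vec_{\bar A}^\omega)^*_\cM$ as a subcategory of $(\Vec_A)^*_\cM$.

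To finish, I would translate the right-hand side into the paper's notation: by Example \ref{dualex} and the convention introduced there, when the $\Vec_A$-action on $\cM \simeq \Coh(Y)$ factors through $\Vec_{\bar A}^\omega$, the dual category $(\Vec_{\bar A}^\omega)^*_\cM$ is by definition $\Coh_{A,\omega}(Y\times Y)$. Combining this with the identification above yields the desired tensor equivalence $\cH(\chi)_\BO \simeq \Coh_{A,\omega}(Y\times Y)$.

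I do not expect a real obstacle at this step: the genuine content has already been absorbed into Theorem \ref{hcw} (fully faithfulness of $\cH(\chi)_\BO \hookrightarrow \Coh_A(Y\times Y)$) and into the indecomposability result \cite[Theorem 5.1]{LO}. The corollary is then a formal consequence, obtained by feeding these two inputs into the double-dual machinery of Corollary \ref{cco}; the only care required is keeping track of the module-category conventions so that $(\Vec_{\bar A}^\omega)^*_\cM$ is correctly interpreted as the cohomologically twisted convolution category $\Coh_{A,\omega}(Y\times Y)$.
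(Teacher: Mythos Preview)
Your proposal is correct and follows essentially the same route as the paper: the paragraph immediately preceding the corollary explicitly says that one uses Theorem~\ref{hcw} to view $\cH(\chi)_\BO$ as a full multi-fusion subcategory of $(\Vec_A)^*_{{}^\chi\Wh^f}$, invokes \cite[Theorem 5.1]{LO} for indecomposability of ${}^\chi\Wh^f$ over $\cH(\chi)_\BO$, and then applies Corollary~\ref{cco}. Your only addition is the explicit unpacking via Example~\ref{dualex} of why the resulting $(\Vec_{\bar A}^\omega)^*_\cM$ is denoted $\Coh_{A,\omega}(Y\times Y)$, which is just a notational clarification.
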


It turns out that the quotient map $A\to \bar A$ always factorizes through $A\to Q\to Q/Q^0=C(e)$.
Thus $\bar A$ is naturally a quotient of the group $C(e)$. It was shown in \cite{LO} that $\bar A$
coincides with the {\em Lusztig's quotient} of $C(e)$ which was introduced by Lusztig 
\cite[Section 13]{Lub}. 
Also it was shown in \cite{LO} (see also \cite{BFO1}) that the class $\omega$ is trivial in almost
all cases. However it is not trivial in the case case of nilpotent orbits corresponding to so
called {\em exceptional} two sided cells, see \cite{Oex}.  

It follows from the results in Section \ref{hc=tr} below that 
the rational Grothendieck ring $K(\cH(\chi)_\BO)\ot \BQ$ is naturally
a quotient of the group algebra $\BQ[W]$. Thus for any module category $\cM$ over
$\cH(\chi)_\BO$ the rational Grothendieck group $K(\cM)\ot \BQ$ is naturally $W-$module. 
In the special case $\cM=\cH(\chi)_\BO \ot \be_i$ we obtain the {\em constructible representations} 
of $W$, see \cite[5.29]{Lub}; these representations are explicitly known. 
On the other hand let $\Spr$ be the {\em Springer
representation} of $W\times C(e)$ (this is top rational cohomology of the {\em Springer fiber} 
associated with $e\in \BO$ with the natural action of $C(e)$ and the action of $W$ defined by Springer
\cite{Spr}). It was proved by Dodd \cite{Dodd} that there is $W\times C(e)-$equivariant 
embedding $K(\Coh(Y))\ot \BQ \subset \Spr$. Using this result together with some results
by Lusztig
on Springer representation \cite{LuSpr} and Lemma \ref{214} the module category 
$\Coh(Y)$ over $\Vec_{\bar A}^\omega$ was explicitly determined in all cases in \cite{LO}. 
We recall that the indecomposable summands of $\Coh(Y)$ are naturally labeled by the simple summands $\be_i$ of $\be \in \cH(\chi)_\BO$, see Example \ref{dualex}.  Moreover, each such summand
is of the form $\cM(B_i,\psi_i)$, see Example \ref{modex} (ii). It was shown in \cite{LO} that we have
$C(e)-$equivariant isomorphism $\BQ [\bar A/B_i]\simeq \Hom_W(K(\cH(\chi)_\BO \ot \be_i)\ot \BQ,
\Spr(\BO))$; moreover this determines the subgroups $B_i\subset \bar A$ uniquely up to
conjugacy. It turned out that the subgroups $B_i$ precisely coincide with {\em Lusztig's subgroups}
\cite[Proposition 3.8]{Lulead}
associated to various left cells in $W$ (recall that the summands $\be_i$ are labeled by 
the left cells contained in the two sided cell corresponding to $\BO$, see Section \ref{cellc}). Note that the map $\Ann_\chi :Y\to \Pr_\chi(\bar \BO)$
has the following interpretation: for any $M\in Y$ there is a unique $\be_i$ such that
$\be_i\ot M\simeq M$ and $\Ann_\chi(M)$ is precisely 
the primitive ideal $I$ such that $\be_i=M_I$, see Section \ref{cellc}. This implies 
that the fiber of the map $\Ann_\chi$ over $I\in \Pr_\chi(\BO)$ such that $\be_i=M_I$ 
is precisely $C(e)-$set $\bar A/B_i$.

Now let us assume that the two sided cell corresponding to the orbit $\BO$ is not exceptional
(so the class $\omega$ is trivial). It follows from the computations described above that there
is one class $\psi \in H^2(\bar A,\BC^\times)$ such that the classes $\psi_i$ are just inverse 
images of $\psi$ under the embeddings $B_i\subset \bar A$, see \cite[Theorem 7.4]{LO}.
Equivalently, the class describing the projective action 
of $Q_M$ on $M\in Y$ is the inverse image of $\psi$ under the map $Q_M\subset Q\to C(e)\to \bar A$.
The recent results of Losev imply that the class $\psi$ is always trivial.
To prove this we can assume that $\fg$ is simple.
The result certainly holds if $H^2(\bar A,\BC^\times)=0$. It follows from the classification of
the nilpotent orbits that if $H^2(\bar A,\BC^\times)\ne 0$
then either $\fg$ is classical or $\fg$ is exceptional and $\bar A$ is the symmetric group on four
or five letters. In both cases there exists a 1-dimensional $U(\fg,e)-$module fixed by the action of $Q$:
for the classical $\fg$ this is  \cite[Theorem 1.2]{Lo1d} and for the exceptional $\fg$ one can use the
{\em generalized Miura transform} (see \cite[2.2]{Loicm}) since $e$ must be even in this case.
Thus we obtain the desired triviality of $\psi$ since
a projective 1-dimensional representation is equivalent to an actual representation.

\begin{remark} (i) There is a conjectural extension of the picture above to the case of
non-integral central characters $\chi$, see \cite[7.6]{LO} The computations suggest that 
in this case non-trivial
2-cocycles will arise quite often.

(ii) The results above give a description of the set $Y$ (we note that for the Lie algebras of type $A$
such a description is due to Brundan and Kleshchev \cite{BrK}). An immediate next question is what
are dimensions of the spaces $\Sk(M), M\in Y$, or, equivalently, what are dimensions of the
irreducible representations of $W-$algebras.  A complete answer to this question is given
in a recent paper \cite{Lo1d}; remarkably in the same time some old questions about the
{\em Goldie ranks} of the primitive ideals are resolved in {\em loc. cit.}
\end{remark}

\subsection{Harish-Chandra modules} \label{hcm}
 It would be interesting to investigate whether the ideas above 
apply to the categories $\cH^K$ of $(\fg,K)-$modules as in Remark \ref{gK}. Recall that we have a
{\em Cartan decomposition} $\fg =\fk \oplus \fp$ where $\fk$ is the complexified Lie algebra of $K$.
For a finitely generated $(\fg,K)-$module $M$ one defines its associated variety $V(M)\subset \fp$,
see \cite{Vo}. Let ${}^\chi \cH^K$ be the full subcategory of $\cH^K$ consisting of modules $M$
such that $Z(\fg)$ acts on $M$ via central character $\chi$. Then for $M\in {}^\chi \cH^K$
we have $V(M)\subset \fp \cap \cN$, see \cite[Corollary 5.13]{Vo}. 

Clearly the category $\cH(\chi)$ acts on ${}^\chi \cH^K$ via $\ot_{U(\fg)}$. 
Let us fix a nilpotent orbit $\BO$ and consider the Serre subcategories ${}^\chi \cH^K_{\le \BO}$ and 
${}^\chi \cH^K_{< \BO}$ consisting of 
$M\in {}^\chi \cH^K$ with $V(M)\subset \fp \cap \bar \BO$ and $V(M)\subset \fp \cap (\bar \BO -\BO)$. 
We can form the quotient category
${}^\chi \tilde \cH^K_{\BO}={}^\chi \cH^K_{\le \BO}/{}^\chi \cH^K_{< \BO}$; then $\ot_{U(\fg)}$ gives
us a bifunctor $\ot: \cH(\chi)_\BO \times {}^\chi \tilde \cH^K_{\BO}\to {}^\chi \tilde \cH^K_{\BO}$.
Let ${}^\chi \cH^K_{\BO}\subset {}^\chi \tilde \cH^K_{\BO}$ be the full subcategory of semisimple objects.

\begin{conjecture} \label{gKconj}
For $M\in \cH(\chi)_\BO$ and $N\in {}^\chi \cH^K_{\BO}$ we have
$M\ot N\in {}^\chi \cH^K_{\BO}$.
\end{conjecture}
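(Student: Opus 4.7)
The plan is to follow the strategy of \cite{Lo2} used for the Whittaker analog preceding Theorem \ref{hcw}. The bifunctor
$$\cH(\chi)_\BO \times {}^\chi \tilde \cH^K_\BO \to {}^\chi \tilde \cH^K_\BO$$
is already available from the discussion just before the conjecture, using $V(M\ot_{U(\fg)} N) \subset V(M) \cap V(N)$ with $V(N) \subset \fp$ viewed inside $\fg$. The real content is the semisimplicity claim: if $M$ is semisimple in $\tilde \cH(\chi)_\BO$ and $N$ is semisimple in ${}^\chi \tilde \cH^K_\BO$, then so is $M\ot N$.

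My approach would be to construct a $K$-equivariant analog of Losev's restriction functor $\Phi^K : {}^\chi \cH^K_{\le \BO} \to \Coh(Y_K)$. By the Kostant-Sekiguchi correspondence, one picks a representative $e$ of a $K_\BC$-orbit in $\fp \cap \BO$ fitting into a $\theta$-stable $sl_2$-triple $(e,h,f)$ with $e,f\in\fp$ and $h\in\fk$. The Slodowy slice $e+\fg^f$ then carries a residual action of the $\theta$-fixed part of $Q$, and one expects a $K_\BC$-equivariant refinement of the Skryabin equivalence identifying a suitable subquotient of ${}^\chi \cH^K$ with finite-dimensional modules over $U(\fg,e)$ that are equivariant for the appropriate stabilizer in $K_\BC$. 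Once constructed, $\Phi^K$ should be exact, annihilate ${}^\chi \cH^K_{<\BO}$, be faithful on simple objects of ${}^\chi \tilde \cH^K_\BO$, and intertwine the $\cH(\chi)_\BO$-action with the natural $\Coh_{A,\omega}(Y\times Y)$-action on $\Coh(Y_K)$ via the equivalence of Corollary \ref{barA}. Granted these properties, for $N \in {}^\chi \cH^K_\BO$ the image $\Phi^K(N)\in \Coh(Y_K)$ is semisimple, so $\Phi^K(M\ot N)\simeq M\ot \Phi^K(N)$ is semisimple in the module category $\Coh(Y_K)$; exactness plus faithfulness on the semisimple quotient then force $M\ot N \in {}^\chi \cH^K_\BO$.

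The hardest step is the construction of $\Phi^K$ and the verification of its formal properties. The specific complications in the $(\fg,K)$ setting are: (i) the Kostant-Sekiguchi correspondence replaces a single $G$-orbit $\BO$ by a finite set of $K_\BC$-orbits in $\fp\cap\BO$, so one must assemble one slice per orbit and glue the resulting functors; (ii) the relevant equivariant stabilizer can force a nontrivial 2-cocycle on $\bar A$ even when the two-sided cell for $\BO$ is non-exceptional, obstructing a naive lifting of $K$-equivariant modules over $U(\fg,e)$; (iii) exactness of the restriction functor requires adapting the deformation-quantization arguments of \cite{Lo1, Lo2} to accommodate the involution $\theta$ and the locally finite $K$-action, which I expect to be the genuine technical bottleneck.
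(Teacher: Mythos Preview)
The statement you are attempting to prove is labeled \emph{Conjecture} in the paper, and the paper offers no proof: immediately after stating it, the author writes ``Conjecture \ref{gKconj} would imply that ${}^\chi \cH^K_{\BO}$ is a module category over $\cH(\chi)_\BO$'' and then discusses what one could do \emph{if} it held. There is no argument in the paper to compare your proposal against.

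That said, your outline is a sensible program and is clearly modeled on the Whittaker case (Section~\ref{Dodd etc}), which is exactly the analogy the paper invites. You have also correctly flagged the genuine obstacles: the passage from a single $G$-orbit to several $K_\BC$-orbits via Kostant--Sekiguchi, the possible appearance of nontrivial $2$-cocycles, and the need to adapt Losev's deformation-quantization machinery to the $\theta$-stable setting. But you should be aware that what you have written is a research plan for an open problem, not a proof; each of the steps you list as ``expected'' or ``granted these properties'' is unproven, and the paper gives no indication that the author knows how to carry them out. In particular, the construction and exactness of a $K$-equivariant restriction functor $\Phi^K$ with the intertwining property you need is not in the literature cited, and your proposal does not supply it.
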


Conjecture \ref{gKconj} would imply that ${}^\chi \cH^K_{\BO}$ is a module category over
$\cH(\chi)_\BO$. By Corollary \ref{barA} we have a tensor equivalence 
$\cH(\chi)_\BO=\Coh_{A,\omega}(Y\times Y)$ and by 
the results of Section \ref{mdc} we have a classification of all indecomposable module categories
over $\Coh_{A,\omega}(Y\times Y)$.
It would be very interesting to decompose the category ${}^\chi \cH^K_{\BO}$ and to
identify the indecomposable summands in terms of this classification.

\section{Sheaves} \label{CSh}
Let $F$ be an algebraically closed field of arbitrary characteristic. In this Section
we will consider consider
various classes of sheaves on algebraic varieties over $F$: $D-$modules ($F$ is of characteristic zero),
constructible sheaves in the classical topology ($F=\BC$), and constructible $l-$adic sheaves ($l$ is invertible in $F$). The corresponding categories of sheaves are $k-$linear where $k=F$, $k$ is
arbitrary of characteristic zero, and $k=\bar \BQ_l$ respectively.
Recall that the theories of such sheaves are parallel up to some extent. Thus we will not specify 
the kind of sheaves we deal with below unless this is necessary; the results are parallel in all
three setups.

\subsection{Convolution and Hecke algebra} Let $G$ be a semisimple algebraic group over $F$ of the same Dynkin type as $\fg$. Let $\Fl$ be the {\em flag variety} of $G$. We recall that $\Fl$ is a projective variety
which is a homogeneous space for $G$; furthermore the {\em Bruhat decomposition} gives a 
canonical bijection between $G-$orbits on $\Fl \times \Fl$ and the Weyl group $W$.
Let $D^b_G(\Fl \times \Fl)$ be the suitable
$G-$equivariant derived category of sheaves on $\Fl \times \Fl$, see e.g. \cite[2.2]{BFO2}.  The category
$D^b_G(\Fl \times \Fl)$ contains a natural abelian subcategory $\cP$ consisting of $D-$modules 
or perverse sheaves. The simple objects in the category $\cP$ are the intersection cohomology
complexes of closures of $G-$orbits on $\Fl \times \Fl$. This gives a natural bijection $w\to I_w$ 
between $W$ and the isomorphism classes of simple objects in $\cP$.

The category
$D^b_G(\Fl \times \Fl)$ has a natural monoidal structure with respect to tensor product given
by {\em convolution}, see e.g. \cite[2.4]{BFO2} (this construction is parallel to Example \ref{multex} (iii)).
 It follows from the Decomposition Theorem \cite{BBD}
that the convolution $I_{u}*I_{v}$ is isomorphic to a direct sum of shifted $I_w$:
$$I_u*I_v\simeq \bigoplus_{w\in W, i\in \BZ} n_{u,v}^w(i)I_w[i]$$
where the multiplicities $n_{u,v}^w(i)\in \BZ_{\ge 0}$.
Let $K(\cP)$ be the algebra over $\BZ[t, t^{-1}]$ which encodes the multiplicities $n_{u,v}^w(i)$ above:
the algebra has a basis $c_w$ and $c_u\cdot c_v=\sum_{w\in W, i\in \BZ} n_{u,v}^w(i)c_wt^i$. 
It is a classical result that the algebra $K(\cP)$ together with its basis $\{ c_w\}$ identifies with 
the {\em Hecke algebra} together with the {\em Kazhdan-Lusztig basis}, see e.g. \cite[2.5]{SpQ}. 
In particular, the multiplicities $n_{u,v}^w(i)$ are computable in principle.

Lusztig defined (see \cite{Lulead}) the {\em asymptotic Hecke algebra} $J$ in the following way:
for $w\in W$ let $a(w)=\max \{ i\in \BZ | n_{u,v}^w(i)\ne 0\, \mbox{for some}\, u,v\in W\}$. 
Let $J$ be a free $\BZ-$module with basis $t_w, w\in W$ endowed with multiplication
$t_ut_v=\sum_{w\in W}n_{u,v}^w(a(w))t_w$. It was shown by Lusztig that this multiplication
is associative and has a unit. Moreover there is a canonical isomorphism of associative algebras
$\BQ[W]\simeq J\ot \BQ$, see \cite[3.2]{Lulead}. 
Furthermore, for any subset $T\subset W$ let $J_T$ be the abelian
subgroup of $J$ spanned by $t_u, u\in T$. It is easy to see that there is a finest partition
$W=\sqcup C$ such that the decomposition $J=\bigoplus_CJ_C$ is a direct sum of algebras.
This partition is known to coincide with partition of $W$ into two sided Kazhdan-Lusztig cells,
see \cite[3.1]{Lulead}. 
It is also known that the function $a$ takes a constant value on any two sided cell $C$;
we will denote this value by $a(C)$.
 
The constructions above was categorified in \cite{Lutc}. Namely, for any two sided cell $C$ let
$\cJ_C\subset \cP$ be the full subcategory consisting of direct sums $I_w, w\in C$. 
The category $\cJ_C$ has a monoidal structure given by the {\em truncated convolution} $\bullet$,
see \cite{Lutc}. For example
$$I_u\bullet I_v\simeq \bigoplus_{w\in C} n_{u,v}^w(a(C))I_w.$$
Hence the assignment $I_w\mapsto t_w$ induces isomorphism of based rings
$K(\cJ_C)\simeq J_C$. It was shown in \cite{BFO1} (see also \cite{Lutrun}) that
the category $\cJ_C$ is rigid. As a consequence$\cJ_C$ is an indecomposable multi-fusion category. 

\subsection{$D-$modules and Harish-Chandra bimodules} \label{hc=tr}
In this section we assume that $F$ is of characteristic zero. The Beilinson-Bernstein theorem
\cite{BB} is a fundamental result in representation theory of $\fg$. It states that the category of
$U(\fg)-$modules with the trivial central character $\chi=\chi_0$ is equivalent to the category
of $D-$modules on $\Fl$. As a consequence one deduces that  the category of Harish-Chandra
bimodules $\cH(\chi_0)$ is equivalent to the category of $D-$modules $\cP$. However 
the Beilinson-Bernstein equivalence is not a tensor equivalence. Luckily it was shown 
in \cite{BeGi} that a composition of the Beilinson-Bernstein equivalence and a {\em long intertwining
functor} has a natural structure of tensor functor. Using a suitable truncation of this functor
the following result was shown in \cite[Corollary 4.5(b)]{BFO2}:

\begin{theorem} \label{joth}
 Let $C$ be the two-sided cell corresponding to a special nilpotent orbit $\BO$
(see Section \ref{assv}). Then there is a natural tensor equivalence $\cH(\chi_0)_\BO \simeq \cJ_C$.
\end{theorem}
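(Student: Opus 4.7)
The plan is to factor the equivalence through the Beilinson--Bernstein localization in its bimodule form. First I would invoke the Beilinson--Bernstein equivalence \cite{BB} to identify $\cH(\chi_0)$ with the category $\cP$ of $G$-equivariant perverse sheaves on $\Fl \times \Fl$ as abelian categories, matching simple objects $M_w \leftrightarrow I_w$. I would then align the associated-variety filtration on the bimodule side with the two-sided cell filtration on the sheaf side: by the Borho--Brylinski--Joseph results recalled in Section \ref{assv}, $V(M_w)=\bar\BO$ precisely when $w$ lies in the two-sided cell $C$ attached to $\BO$. Hence the Serre subcategories $\cH(\chi_0)_{\le \bar\BO}$ and $\cH(\chi_0)_{< \bar\BO}$ correspond to the Serre subcategories of $\cP$ generated by the $I_w$ whose indices lie in two-sided cells $\le C$ and $< C$ respectively in the Kazhdan--Lusztig order, and the Serre quotient at $\BO$ isolates the $I_w$ with $w \in C$; its semisimple part is $\cJ_C$ viewed as an abelian category.

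The subtle point is the monoidal structure, since the plain Beilinson--Bernstein functor does not intertwine $\ot_{U(\fg)}$ with convolution $*$. Here I would appeal to \cite{BeGi}, where it is shown that composing Beilinson--Bernstein with a long intertwining functor produces a functor carrying a canonical tensor structure, intertwining $\ot_{U(\fg)}$ on $\cH(\chi_0)$ with convolution on $\cP$ (valued in the ambient equivariant derived category). Since convolution of two perverse sheaves is in general not perverse, a further truncation is required to land in the multi-fusion setting.

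Finally I would match the two truncation procedures. On the sheaf side $\bullet$ retains only the summands $n_{u,v}^w(a(C))I_w$ for $w \in C$, discarding both smaller cells and lower perverse cohomological shifts. On the bimodule side, the Serre quotient $\cH(\chi_0)_{\le \bar\BO}/\cH(\chi_0)_{< \bar\BO}$ followed by semisimplification kills exactly the summands of $M_u \ot_{U(\fg)} M_v$ whose associated variety lies strictly inside $\bar\BO$. Checking that these two operations correspond under the tensor-compatible Beilinson--Bernstein functor, and that the residual associativity constraints agree, yields the claimed tensor equivalence $\cH(\chi_0)_\BO \simeq \cJ_C$. The main obstacle I expect is precisely this matching: showing that the twisted Beilinson--Bernstein functor sends the associated-variety filtration exactly onto the combined filtration by two-sided cell order and perverse cohomological degree, so that the semisimple subquotient on either side is identified with the same top piece. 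Controlling the non-perverse summands of $I_u * I_v$ and identifying them on the nose with the contributions lying in $\cH(\chi_0)_{< \bar\BO}$, together with the compatibility of the associativity constraints after truncation, is the technical core of the argument and is essentially the content of \cite[Corollary 4.5(b)]{BFO2}.
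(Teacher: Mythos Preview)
Your proposal is correct and follows essentially the same approach as the paper: Beilinson--Bernstein localization to identify $\cH(\chi_0)$ with $\cP$, composition with the long intertwining functor from \cite{BeGi} to obtain a tensor structure, and then a suitable truncation compatible with the cell/associated-variety filtrations, with the technical core deferred to \cite[Corollary 4.5(b)]{BFO2}. The paper is a survey and gives only this outline, so your expansion of the matching of filtrations and truncations is a faithful elaboration of what the cited reference actually carries out.
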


Theorem \ref{joth} gives a useful information on the Grothendieck ring $K(\cH(\chi)_\BO)$
(we recall that for a regular integral character $\chi$ the category $\cH(\chi)_\BO$ is tensor
equivalent to $\cH(\chi_0)_\BO$, see Section \ref{BeGe}).
In particular we get a homomorphism $\BQ[W]\simeq J\ot \BQ \to J_C\ot \BQ=K(\cJ_C)\ot \BQ =
K(\cH(\chi_0)_\BO)\ot \BQ$ alluded to in Section \ref{Dodd etc}. In particular, 
we can consider $K(\cJ_C\ot \be_i)\ot \BQ$ as $W-$modules; these $W-$ representations 
are precisely the constructible representations discussed in {\em loc. cit.}

Conversely Theorem \ref{joth} combined
with Corollary \ref{barA} gives an explicit description of $\cJ_C$ as $\Coh_{\bar A}(Y\times Y)$ for the 
$D-$module version of the category $\cJ_C$. This implies similar description of $\cJ_C$ for other
categories of sheaves under the assumption that the ground field $F$ has characteristic 0. It was
shown in \cite{BFO1} (see also \cite{Oex} for the case of exceptional two sided cells) that the same description holds over a field $F$ of arbitrary characteristic.

\begin{remark} Theorem \ref{joth} was inspired by closely related results of Joseph \cite{Jotc}.
Also a similar and related connection between Kazhdan-Lusztig cells and Harish-Chandra
bimodules is contained in the work of Mazorchuk and Stroppel \cite{MaSt}.
\end{remark}

\subsection{Drinfeld center and character sheaves}
Lusztig introduced a very important class of {\em character sheaves} on the group $G$, see \cite{LuC}.
We recall the definition in the special case of {\em unipotent character sheaves}. Let 
$$X=\{ (b,b',g)\in
\Fl \times \Fl \times G | gb=b'\}$$ 
we have two projections $f: X\to \Fl \times \Fl, f(b,b',g)=(b,b')$ and 
$\pi : X\to G, \pi(b,b',g)=g$. Note that group $G$ acts on itself by conjugations and on $X$ via 
$h\cdot (b,b',g)=(hb,hb',hgh^{-1})$ and both maps $f,\pi$ are $G-$equivariant. Thus we have a functor
$\Gamma: D^b_G(\Fl \times \Fl)\to D^b_G(G), \Gamma =\pi_!f^*$. It follows from the Decomposition
Theorem that $\Gamma(I_w), w\in W$ is isomorphic to a direct sum of shifted simple 
$G-$equivariant sheaves on $G$; a simple $G-$equivariant sheaf is called a unipotent character
sheaf if it appears in such decomposition (possibly with some shift). Let $\cU$ be the set of
isomorphism classes of unipotent character sheaves on $G$; clearly this is a finite set.

One observes that the functor $\Gamma$ above has formal properties similar to the induction
functor from the monoidal category $D^b_G(\Fl \times \Fl)$ to its Drinfeld center, see Section \ref{drc}. Moreover,
it is possible to identify a suitable version of the Drinfeld center of $G-$equivariant sheaves on
$\Fl \times \Fl$ with suitable category of character sheaves. This was done in \cite{BFO2} using
the abelian tensor category of Harish-Chandra bimodules and in \cite{BN} using suitable
infinity categories. Furthermore applying a suitably truncated version of the same idea to the 
categories $\cJ_C$,
the following result was proved in \cite{BFO2} (for the field $F$ of characteristic zero)
and in \cite{Lutrun} (for the field $F$ of arbitrary characteristic):

\begin{theorem} \label{charac}
There is a partition $\cU=\sqcup_C \cU_C$ such that 
the Drinfeld center of the category $\cJ_C$ is naturally equivalent to 
the category of sheaves on $G$ which are direct sums of objects from $\cU_C$.
In particular we have a bijection 
$$\cU_C \leftrightarrow \{ \mbox{simple objects of the Drinfeld center of} \, \cJ_C\} .$$
\end{theorem}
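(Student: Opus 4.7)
The plan is to realize the Drinfeld center $\cZ(\cJ_C)$ geometrically via the functor $\Gamma$, by viewing $\Gamma$ as a truncated version of the induction functor from $\cJ_C$ to its center, and to identify unipotent character sheaves with the image of induction. The guiding observation is that the variety $X=\{(b,b',g)\mid gb=b'\}$ is tailored precisely so that a simple $G$-equivariant sheaf $\F$ on $G$, pulled back and pushed forward, yields an object of $D^b_G(\Fl\times\Fl)$ equipped with a canonical half-braiding: the element $g\in G$ is exactly the data of the intertwiner $\F\ot ?\simeq ?\ot \F$ between left and right convolution, matching the classical description of $\cZ$ recalled in Section \ref{drc}.

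The first main step is to establish that a suitable version of the Drinfeld center of the ambient convolution category $\cP$ is equivalent to the category of unipotent character sheaves on $G$, and that under this equivalence the forgetful functor $\cZ(\cP)\to \cP$ corresponds to (a truncation of) $\Gamma$ composed with the passage from $D^b_G(G)$ back to $\cP$. In characteristic zero one can realize this through the abelian tensor category of Harish-Chandra bimodules, exploiting Theorem \ref{joth} together with the modified Beilinson--Bernstein equivalence of \cite{BeGi} used in \cite{BFO2}; in general characteristic one follows \cite{Lutrun} directly on the perverse side.

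The second and decisive step is the truncation. Since $\cJ_C$ is an indecomposable multi-fusion category, $\cZ(\cJ_C)$ is automatically a fusion category by Theorem 2.15 of \cite{ENO}, hence has finitely many simple objects. To obtain the partition one argues that the cell filtration on $\cP$ is compatible with the formation of the Drinfeld center: for each unipotent character sheaf $\F$, the underlying perverse sheaf $\Gamma$-preimage lives in a unique two-sided cell $C=C(\F)$, because the half-braiding data descend to the truncated convolution $\bullet$ only on the top graded piece labeled by $C$. Setting $\cU_C:=\{\F\in\cU\mid C(\F)=C\}$ then yields both the partition $\cU=\sqcup_C\cU_C$ and, by Morita invariance of the center (Corollary 2.6 of \cite{Odr}) applied to the cell quotient, the equivalence $\cZ(\cJ_C)\simeq \langle \cU_C\rangle$; the bijection on simple objects follows.

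The hard part will be making the truncation precise at the categorical level. The ambient triangulated category $D^b_G(\Fl\times\Fl)$ is not rigid, so the naive Drinfeld center is ill-behaved; one must work either with the abelian tensor refinement through Harish-Chandra bimodules, where $\cH(\chi_0)_\BO\simeq \cJ_C$, or with an infinity-categorical enhancement as in \cite{BN}. Controlling how the quotient $\cP\to \cJ_C$ interacts with the center, and verifying that $\Gamma$ matches the induction functor on the nose once restricted to the cell, is the genuinely technical heart of the argument and requires the full convolution/center dictionary developed in \cite{BFO2,Lutrun}.
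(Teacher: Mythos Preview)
Your outline follows the paper's own sketch closely: identify $\Gamma$ with an induction-to-center functor, then truncate cell by cell, with the characteristic-zero version passing through Harish-Chandra bimodules as in \cite{BFO2} and the general case handled directly on the perverse side as in \cite{Lutrun}. The paper does not supply a self-contained proof beyond this, so at the level of strategy you are aligned with it.

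There is, however, a genuine misstep. Your appeal to Morita invariance of the center (Corollary 2.6 of \cite{Odr}) to deduce $\cZ(\cJ_C)\simeq \langle \cU_C\rangle$ is misplaced. Morita invariance says $\cZ(\cC)\simeq \cZ(\cC^*_\cM)$ for a module category $\cM$; it relates centers of dual categories, not the center of a Serre subquotient to a piece of the center of the ambient category. There is no module category in sight that would make $\cJ_C$ dual to something whose center is manifestly $\langle\cU_C\rangle$. In the actual argument of \cite{BFO2,Lutrun} the equivalence is obtained by constructing truncated versions of both the forgetful and the induction functors between $\cZ(\cJ_C)$ and the cell-truncated category of character sheaves, and then checking directly that they are quasi-inverse; this is exactly the ``hard part'' you flag at the end, and Morita invariance does not shortcut it. In the paper Morita invariance enters only afterward (Example~\ref{drex}), to compute $\cZ(\cJ_C)\simeq \cZ(\Vec_{\bar A}^\omega)$ once Corollary~\ref{barA} is known.

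A smaller point: your definition of the partition via ``the $\Gamma$-preimage lives in a unique cell $C(\F)$'' is circular as stated, since $\Gamma$ goes from $\Fl\times\Fl$ to $G$, not the other way, and the constituents of $\Gamma(I_w)$ for $w$ in a fixed cell need not all lie in a single $\cU_C$ a priori. The partition $\cU=\sqcup_C\cU_C$ is an output of the truncation argument (or is Lusztig's earlier definition from \cite{LuC3}), not an input you can read off from $\Gamma$ before doing the work.
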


The sets $\cU_C$ were defined by Lusztig in \cite[Section 16]{LuC3}. Recall that the category $\cJ_C$ is
tensor equivalent to $\Coh_{\bar A,\omega}(Y\times Y)$. Hence the Drinfeld center of $\cJ_C$
is equivalent to the Drinfeld center of $\Vec_{\bar A}^\omega$, see Example \ref{drex}. 
The resulting bijection
between $\cU_C$ and simple objects of $\cZ(\Vec_{\bar A}^\omega)$ 
conjecturally coincides with Lusztig's one from \cite[17.8.3]{LuC4} which gives us
a new approach to Lusztig's classification of character sheaves. 
On the other hand in \cite{Oex} character sheaves were used in order to determine
the associativity constraint in the categories $\cJ_C$ for exceptional cells $C$.

\subsection{Some generalizations} Many constructions described in this paper extend
to the case when the Weyl group $W$ is replaced by an arbitrary Coxeter group. An important
special case of the {\em affine Weyl groups} was considered in \cite{B,BO} following conjectures
made by Lusztig. In this case the counterparts of the cell categories are in one to one 
correspondence with all nilpotent orbits of $\fg$ and are of the form
$\Coh_Q(Y\times Y)$ where the reductive group $Q$ is the same as in Section \ref{Walg}
(note that the resulting categories are typically not multi-fusion categories since they
have infinitely many simple objects). 
The set $Y$ has a natural interpretation in terms of {\em non-restricted representations} of
$\fg$ over fields of positive characteristic, see \cite{BM}.

Using recent deep results by Elias and Williamson \cite{EW} on Soergel bimodules \cite{Sb}
Lusztig defined in \cite[Section 10]{Lutrun}
the counterparts of the cell categories for an arbitrary Coxeter group $W$ (note that
these categories sometimes are not even tensor categories since they lack the unit object;
however this is not very serious). It would be
very interesting to identify the resulting categories. For example in the case of the dihedral group
of order 10 one finds a cell category which contains a fusion subcategory with two simple objects 
$\be, X$ and the tensor product $X\ot X=\be \oplus X$. This implies that the cell category is not
of the form $\Coh_{A,\omega}(Y\times Y)$ in this case.

%%%%%%%%%%%%%%%%%%%%%%%%%%%%%%%%%%%%%%%%%%%%%%%%
%%%%%%%%%%%%%%%%%%%%%%%%%%%%%%%%%%%%%%%%%%%%%%%%
%%%%%%%%%%%%%%%%%%%%%%%%%%%%%%%%%%%%%%%%%%%%%%%%

\end{document}